\numberwithin{equation}{section}
\numberwithin{figure}{section}
\theoremstyle{plain}
\newtheorem{thm}{\protect\theoremname}
  \theoremstyle{plain}
  \newtheorem{prop}[thm]{\protect\propositionname}
  \theoremstyle{definition}
  \newtheorem{example}[thm]{\protect\examplename}
  \theoremstyle{plain}
  \newtheorem{lem}[thm]{\protect\lemmaname}
  \theoremstyle{definition}
  \newtheorem{defn}[thm]{\protect\definitionname}
  \newtheorem{rem}[thm]{\protect\remarkname}
\newcommand{\norm}[1]{{\left\Vert #1\right\Vert}}
\newcommand{\abs}[1]{{\left\vert #1\right\vert}}
\newcommand{\e}{\varepsilon}
\newcommand{\R}{{\mathbb R}}
\renewcommand{\C}{{\mathbb C}}
\newcommand{\one}{{\mathds{1}}}
\DeclareMathOperator{\supp}{supp}
\DeclareMathOperator{\dist}{dist}
\DeclareMathOperator{\sgn}{sgn}
\date{\today}
  \providecommand{\definitionname}{Definition}
  \providecommand{\examplename}{Example}
  \providecommand{\lemmaname}{Lemma}
  \providecommand{\propositionname}{Proposition}
\providecommand{\theoremname}{Theorem}
\providecommand{\remarkname}{Remark}
\begin{document}

\title[scalar conservation laws with a force]{Regularity of solutions to scalar conservation laws with a force}
\begin{abstract}
We prove regularity estimates for entropy solutions to scalar conservation laws with a force. Based on the kinetic form of a scalar conservation law, a new decomposition of entropy solutions is introduced, by means of a decomposition in the velocity variable, adapted to the non-degeneracy properties of the flux function. This allows a finer control of the degeneracy behavior of the flux. In addition, this decomposition allows to make use of the fact that the entropy dissipation measure has locally finite singular moments. Based on these observations, improved regularity estimates for entropy solutions to (forced) scalar conservation laws are obtained.
\end{abstract}

\author{Benjamin Gess}

\address{Max-Planck Institute for Mathematics in the Sciences\\
Inselstraße 22\\
04103 Leipzig\\
Germany}

\email{bgess@mis.mpg.de}

\author{Xavier Lamy}

\address{Institut de Mathématiques de Toulouse\\
Université Paul Sabatier\\
Toulouse\\
France}

\email{xlamy@math.univ-toulouse.fr}

\keywords{Scalar conservation laws, averaging Lemma}

\maketitle

\section{Introduction}

\label{s:intro}

We consider the regularity of solutions to scalar conservation laws
\begin{align}
\partial_{t}u+\mathrm{div}A(u) & =S\quad\text{on }(0,T)\times\R^{n}\label{eq:intro_SCL}\\
u(0) & =u_{0}\quad\text{on }\R^{n},\nonumber 
\end{align}
for $S\in L^{1}([0,T]\times\R^{n})$, $u_{0}\in L^{1}(\R^{n})$ and $A\in C^{2}(\R;\R^{n})$ satisfying a non-degeneracy condition to be specified below. 

In the special case, $n=1$, $S\equiv0$ and $A$ convex, the one-sided Oleinik inequality for entropy solutions can be used to obtain optimal regularity estimates for \eqref{eq:intro_SCL}. More precisely, assuming in addition that 
\[
\inf_{(u,v)\in\R^{2},\,u\ne v}\frac{|A'(u)-A'(v)|}{|u-v|^{\ell}}>0
\]
for some $l>0$, Bourdarias, Gisclon and Junca have shown in \cite{BGJ14} that bounded entropy solutions for \eqref{eq:intro_SCL} satisfy $u(t)\in W_{loc}^{\frac{1}{\ell}-\varepsilon,\ell}(\R)$ for all $t,\varepsilon>0$. A typical example is $A(u)=|u|^{\ell+1}$, $\ell\ge 1$.
For a flux function $A$ that fails to be convex, $n=1$, $S\equiv 0$, the same regularity can be obtained under some restrictive assumptions on the zeroes of $A''$, by combining results of Cheng \cite{cheng_86}  and Jabin \cite{jabin_10}.

In multiple dimensions,  or for $S$ non-smooth, these arguments do not apply anymore. In this case, the best known regularity estimates rely on the kinetic formulation of \eqref{eq:intro_SCL}, as introduced by Lions, Perthame and Tadmor in \cite{lions_perthame_tadmor_94}. In this work it was observed\footnote{In fact, \cite{lions_perthame_tadmor_94} treated the case $S\equiv0$ but the same applies to non-vanishing $S$.} that if $u$ is an entropy solution to \eqref{eq:intro_SCL} then the kinetic function 
\begin{equation}
f(t,x,v):=\one_{0<v<u(t,x)}-\one_{0>v>u(t,x)}\label{eq:intro_kin_fct}
\end{equation}
 satisfies 
\begin{equation}
\partial_{t}f+a(v)\cdot\nabla_{x}f=\partial_{v}m+\delta_{v=u}S,\label{eq:kin}
\end{equation}
for some Radon measure $m\geq0$ and $a:=A'$. Based on this and on averaging techniques, regularity estimates for bounded entropy solutions to \eqref{eq:intro_SCL} have been obtained in \cite{lions_perthame_tadmor_94} assuming a non-degeneracy property for the flux $A$ and $S\equiv0$. For the special case of \eqref{eq:intro_SCL} with $A(u)=u{}^{\ell+1}$ this leads to 
\begin{equation}
u\in W_{loc}^{s,p}((0,T)\times\R^n)\quad\forall s<\frac{1}{1+2\ell},\,p<\frac{4\ell+1}{2\ell+1}.\label{eq:intro_LPT_reg}
\end{equation}
In this work, we are particularly interested in the case $\ell>1.$ The limited regularity in \eqref{eq:intro_LPT_reg} is due to the degeneracy of the flux in $u=0$.

Motivated by some ideas going back to Tadmor and Tao \cite{tadmor_tao_07}, we introduce a new decomposition of entropy solutions $u$ which allows to make use of the fact that apart from the degeneracy at $u=0$, the flux $A(u)=u{}^{\ell+1}$ has non-vanishing second derivative. Using this aspect alone we show that it is possible to improve the regularity in \eqref{eq:intro_LPT_reg} to $s<\frac{1}{2+\ell}$. In the literature, a key draw-back of the methods to estimate the regularity of solutions to \eqref{eq:intro_LPT_reg} based on averaging techniques is that these methods are not able to make use of the sign of the entropy dissipation measure $m$ in \eqref{eq:kin}. Indeed, these arguments could only use that $m$ has locally finite mass. In contrast, we make use of the observation that for entropy solutions to \eqref{eq:intro_SCL} the entropy defect measure $m$ has, thanks to its sign, locally finite singular moments, that is, $\abs{v}^{-\gamma}m$ has locally finite mass for all $\gamma\in[0,1)$. This is, to our knowledge, the first time that a kinetic averaging lemma manages, when applied to scalar conservation laws, to take advantage of the sign of the entropy production (see also \cite{porous_medium}). Specializing our results to the particular case $A(u)=u^{l+1}$ we obtain the following result.
\begin{prop}
\label{thm:scalarconservationlaw} Let $\ell\geq1$, $u_{0}\in L^{1}(\R)$, $S\in L^{1}([0,T]\times\R)$ and $u(t,x)$ be an entropy solution of \eqref{eq:intro_SCL} with $n=1$,   $A(v)=\abs{v}^{\ell+1}$ or $A(v)=\sgn(v)\abs{v}^{\ell+1}$ and associated kinetic function $f$ as in \eqref{eq:intro_kin_fct}. Then, for all $\phi\in C_{c}^{\infty}(\R)$,
\[
\int f(t,x,v)\phi(v)\,dv\in W_{loc}^{s,1}((0,T)\times\R^{n})\quad\forall s<\min\left(\frac{1}{3},\frac{1}{\ell+1}\right).
\]
In particular, if $u_{0}\in L^{\infty}(\R^{n})$ and $S\in L^{\infty}([0,T]\times\R^{n})$ then 
\[
u\in W_{loc}^{s,1}((0,T)\times\R^{n})\quad\forall s<\min\left(\frac{1}{3},\frac{1}{\ell+1}\right).
\]
\end{prop}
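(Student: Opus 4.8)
The plan is to deduce everything from the kinetic equation \eqref{eq:kin} together with one extra structural property of the defect measure: since $u$ is an entropy solution, testing the entropy inequality against the convex entropy $\eta$ with $\eta''(v)=\abs{v}^{-\gamma}$ (admissible for every $\gamma\in[0,1)$, since $\abs{v}^{-\gamma}$ is locally integrable) yields the locally finite singular moment bound $\int\abs{v}^{-\gamma}\,dm\lesssim 1$ announced in the introduction. The reduction of the second assertion to the first is immediate: one has $\int f\,dv=u$, so for $u_0,S\in L^\infty$, with $u$ bounded, choosing $\phi\equiv1$ on an interval containing the range of $u$ gives $\int f\phi\,dv=u$; hence it suffices to prove the averaged estimate for arbitrary $\phi\in C_c^\infty$.

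Writing $a=A'$ with $\abs{a(v)}\sim\abs{v}^\ell$ and $\abs{a'(v)}\sim\abs{v}^{\ell-1}$ (on each monotone branch, for both admissible fluxes), the obstruction to regularity is the vanishing of $a'$ at $v=0$. Following the idea indicated in the introduction I would introduce a dyadic decomposition $\phi=\sum_{\lambda}\phi_\lambda$ in the velocity variable, with $\phi_\lambda$ supported in $\abs{v}\sim\lambda=2^{-j}$, and estimate each piece $\rho_\lambda:=\int f\phi_\lambda\,dv$ separately. On the shell $\abs{v}\sim\lambda$ the flux is genuinely non-degenerate, $\abs{a'}\gtrsim\lambda^{\ell-1}$, so one can run a quantitative averaging estimate with all constants tracked in $\lambda$. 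The decisive gain is that the mass of $m$ on the shell is controlled by the singular moment: $\int_{\abs{v}\sim\lambda}dm\leq\lambda^{\gamma}\int\abs{v}^{-\gamma}\,dm\lesssim\lambda^{\gamma}$.

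At frequency $K=\abs{\xi}$ I would split the shell velocities, through a truncation parameter $\delta$, into the bad set $\{\abs{\tau+a(v)\xi}\leq\delta\}$ and its complement. On the bad set one only uses $\abs{f}\leq1$, whose contribution is the measure $\lesssim\min(\lambda,\delta\lambda^{-(\ell-1)}K^{-1})$; on the good set one solves \eqref{eq:kin} for $\hat f$, integrates by parts in $v$, and bounds the resulting $\partial_v\bigl(\phi_\lambda/(\tau+a(v)\xi)\bigr)$ against $m$, producing a term $\lesssim\lambda^{\ell-1}K\delta^{-2}\cdot\lambda^{\gamma}$ from the measure source $\partial_v m$ (the source $\delta_{v=u}S$ contributes the strictly milder $\lesssim\norm{S}_{L^1}\delta^{-1}$). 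Optimizing $\delta$ on each shell and summing over $j$ separates two regimes around the critical shell $\lambda_*\sim K^{-1/(\ell+2-\gamma)}$: the inner shells $\lambda\lesssim\lambda_*$ are handled by the trivial bound and sum to $\lambda_*$, while the outer shells sum to $K^{-1/3}$ when $\ell<1+\gamma$ and again to $\lambda_*$ when $\ell>1+\gamma$. This produces the decay rate $\min(1/3,1/(\ell+2-\gamma))$, and letting $\gamma\uparrow1$ gives exactly $s<\min(1/3,1/(\ell+1))$, the strictness reflecting that $\gamma=1$ is not admissible.

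The main difficulty is to realize this balance genuinely in $W^{s,1}$ rather than in the $L^2$-based framework in which the truncation argument is transparent: the controlled quantities — $\abs{f}\leq1$, the mass and singular moments of $m$, and $\norm{S}_{L^1}$ — are all $L^1$- or measure-type, so I would carry out the averaging through an $L^1$/translation-based estimate (controlling $\norm{\rho_\lambda(\cdot+h)-\rho_\lambda}_{L^1}$) with the shell constants above, rather than via Plancherel. The second delicate point is the summation over shells: one must verify that the per-shell exponents combine to the clean rate $\min(1/3,1/(\ell+2-\gamma))$ uniformly in $K$, and that the limit $\gamma\uparrow1$ is taken afterwards, so that the finiteness of the singular moment for each fixed $\gamma<1$ is precisely what powers the improvement of \eqref{eq:intro_LPT_reg} past the non-degeneracy-only exponent $1/(\ell+2)$ up to $1/(\ell+1)$.
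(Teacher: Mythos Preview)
Your strategy is the paper's strategy, specialized to this flux. The paper packages it as a general averaging lemma (Theorem~\ref{thm:averaging}, then Theorem~\ref{thm:scl}): one verifies that $a(v)=A'(v)$ satisfies \eqref{Zfinite}--\eqref{H3} with $\alpha=1/\ell$, $\beta=1$, $\kappa=\tau=\ell-1$ and $Z=\{0\}$, and plugs in. The decomposition there is first a binary split $\{\abs{v}\le\lambda\}$ versus $\{\abs{v}>\lambda\}$ (Lemmata~\ref{lem:A1f}, \ref{lem:A2f}), with a dyadic refinement only on the outer part; the two pieces are then combined by real interpolation in the parameter $\lambda$. Your full dyadic decomposition in $v$ and the summation over shells is morally the same computation, and your intermediate exponent $\min(1/3,1/(\ell+2-\gamma))$ is exactly what the general formula for $s_*$ reduces to here (for this flux the two branches in each of $E_1,E_2$ coincide, which is why dropping the $\partial_v\phi_\lambda$--term from $\partial_v(\phi_\lambda/(\tau+a(v)\xi))$ does no harm in the end; in general it would). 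The singular moment input you invoke is precisely Proposition~\ref{prop:mass_bound}.

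The one genuine gap is your proposed $L^1$ execution. Your heuristic is entirely Fourier-based (frequency $K$, truncation at $\abs{\tau+a(v)\xi}\le\delta$), and this machinery---the Tadmor--Tao truncation multipliers used in Lemma~\ref{lem:Lpestim}---is bounded on $L^p$ for $1<p<\infty$ but not on $L^1$. You flag this and say you would switch to translation estimates, but that is a different argument and you do not indicate how the same per-shell balance would be recovered there. The paper sidesteps the issue: it runs the multiplier argument with $f\in L^2$ (using $f\in L^2_{loc}(W^{\sigma,2}_v)$, $\sigma<1/2$) and $g=m\in\mathcal{M}$, i.e.\ $p=2$, $q=1$, and lands in $W^{s,r}$ with $r>1$ (in fact $r\le 3/2$), from which the stated $W^{s,1}_{loc}$ follows by local embedding. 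So either carry out the translation-based argument explicitly, or---more simply---follow the paper and work in $L^2\times\mathcal{M}$ with real interpolation, obtaining the stronger $W^{s,r}$ conclusion.
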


\begin{rem}\label{rem:quasisol}
Solutions of \eqref{eq:intro_SCL} for which the entropy dissipation $m$ is only assumed to be a locally finite signed measure are sometimes called quasi-solutions \cite{DOW03}. For the model case in Proposition~\ref{thm:scalarconservationlaw}, the arguments in \cite{lions_perthame_tadmor_94} still apply to this larger class of solutions and provide the regularity \eqref{eq:intro_LPT_reg}. However, when $\ell$ is an integer and $S\equiv0$, Crippa, Otto and Westdickenberg obtain in \cite[Proposition~4.4]{crippa_otto_westdickenberg_08}, without using averaging lemmata, a better  order of differentiability $s<1/(2+\ell)$ which has been shown to be optimal by De Lellis and Westdickenberg \cite{DLW03}. In the case where  $A$ is convex, Golse and Perthame \cite{golse_perthame_13} provide a proof of the same regularity that could be adapted to the presence of a forcing term $S$. Our arguments yield this optimal order of differentiability  $s<1/(2+\ell)$ for quasi-solutions and for all $A$ as in Proposition~\ref{thm:scalarconservationlaw} and in the presence of the forcing term $S$.
\end{rem}

Our estimates are based on certain non-degeneracy properties of general fluxes $A.$ It is a well-known phenomenon that under suitable nonlinearity assumptions on the velocity field $a(v)$, velocity averages of $f$ solving \eqref{eq:kin} are more regular than $f$. In \cite{lions_perthame_tadmor_94}, Lions, Perthame and Tadmor use the following assumption: there exists an $\alpha\in(0,1]$ such that for every  bounded interval $I\subset\R_{v}$ and all $\delta>0$, 
\begin{equation}
\sup_{\tau^{2}+|\xi|^{2}=1}\abs{\left\lbrace v\in I\colon\abs{\tau+a(v)\cdot\xi}<\delta\right\rbrace }\lesssim\delta^{\alpha}.\label{eq:usualassumption}
\end{equation}
They prove that if \eqref{eq:usualassumption} holds and $f\in L^{p}$ solves \eqref{eq:kin} with $m\in L^{q}$ for some $p,q\in(1,2]$, then for any bump function $\phi\in C_{c}^{\infty}(I)$, the velocity averages 
\[
\bar{f}(t,x):=\int f(t,x,v)\phi(v)\,dv,
\]
satisfy 
\[
\bar{f}\in W_{loc}^{s,r}((0,T)\times\R^n),\quad\forall s<\theta=\frac{\alpha/p'}{\alpha(1/p'-1/q')+2},\;\frac{1}{r}=\frac{1-\theta}{p}+\frac{\theta}{q}.
\]
In \cite{tadmor_tao_07}, Tadmor and Tao introduce the additional assumption 
\begin{equation}
\sup\left\lbrace \abs{a'(v)\cdot\xi}\colon v\in I,\,\tau^{2}+\xi^{2}=1,\,\abs{\tau+a(v)\cdot\xi}<\delta\right\rbrace \lesssim\delta^{\mu}.\label{eq:tadmortaoassumption}
\end{equation}
They prove that if \eqref{eq:usualassumption}-\eqref{eq:tadmortaoassumption} hold, then the velocity averages satisfy 
\[
\bar{f}\in W_{loc}^{s,r},\quad \forall s<\theta'=\frac{\alpha/p'}{\alpha(1/p'-1/q')+2-\mu},\;\frac{1}{r}=\frac{1-\theta'}{p}+\frac{\theta'}{q}.
\]
However, while \eqref{eq:tadmortaoassumption} applies to certain parabolic-hyperbolic PDE with non-vanishing parabolic part (which is the main focus of  \cite{tadmor_tao_07}), in many purely hyperbolic cases of interest this additional assumption is not satisfied. As an example let us consider the velocity field $a(v)=v^{\ell}$ for some $\ell\geq1$. Then \eqref{eq:usualassumption} holds with $\alpha=\frac{1}{\ell}$ and this is used in \cite{lions_perthame_tadmor_94} to obtain that entropy solutions of \eqref{eq:intro_SCL} enjoy differentiability of order $s=1/(1+2\ell)$. On the other hand, choosing $\xi=-\tau=1/\sqrt{2}$ and $v=1$ in \eqref{eq:tadmortaoassumption} shows that one cannot do better than $\mu=0$. Hence, for $a(v)=v^{\ell}$ the result in \cite{tadmor_tao_07} can not provide any improvement on \cite{lions_perthame_tadmor_94}. 

Proposition~\ref{thm:scalarconservationlaw} will be obtained as a corollary of a general averaging lemma for the kinetic equation 
\begin{equation}
\partial_{t}f+a(v)\cdot\nabla_{x}f=\partial_{v}g+h\quad\text{on }\R_{t}\times\R_{x}^{n}\times\R_{v}.\label{eq:kinmultid}
\end{equation}
As outlined above, our argument relies on the idea underlying the assumption \eqref{eq:tadmortaoassumption} in \cite{tadmor_tao_07} but requires a finer decomposition. %We observe that the supremum in \eqref{eq:tadmortaoassumption} is smaller for small velocities, while the supremum in \eqref{eq:usualassumption} is smaller for large velocities. Thus, we estimate separately (and carefully) averages over velocities $\abs{v}\leq\lambda$ and $\abs{v}\geq\lambda$, and then optimize with respect to $\lambda$. 

We consider a velocity field $a\in C^{1}(\mathbb{R};\R^{n})$ such that the set of degeneracy points 
\begin{equation}
Z:=\left\lbrace a'=0\right\rbrace \text{ is locally finite}\label{Zfinite}
\end{equation}
and assume that there exist $\alpha<\beta\in(0,1]$ and $\kappa,\tau\geq0$ such that for any bounded interval $I\subset\R_{v}$ and $\lambda,\delta>0$ it holds 
\begin{align}
 & \sup_{\tau^{2}+\abs{\xi}^{2}=1}\abs{\left\lbrace v\in I\colon\abs{\tau+a(v)\cdot\xi}\leq\delta\right\rbrace }\lesssim\delta^{\alpha},\label{H1}\\
 & \sup_{v\in I,\,\dist(v,Z)\leq\lambda}\abs{a'(v)}\lesssim\lambda^{\kappa},\label{H2}\\
 & \sup_{\tau^{2}+\abs{\xi}^{2}=1}\abs{\left\lbrace v\in I\colon\dist(v,Z)\geq\lambda,\,\abs{\tau+a(v)\cdot\xi}\leq\delta\right\rbrace }\lesssim\lambda^{-\tau}\delta^{\beta}.\label{H3}
\end{align}
Note that since $I$ is bounded, \eqref{H1}--\eqref{H3} are trivially satisfied for $\delta,\lambda$ large. 

Assumption \eqref{H1} is nothing but the classical nonlinearity assumption \eqref{eq:usualassumption}. Assumption \eqref{H2} is similar to the assumption \eqref{eq:tadmortaoassumption} used by Tadmor and Tao in \cite{tadmor_tao_07}, but is supposed only near the degeneracy points of $a'$. Away from the degeneracies, assumption \eqref{H3} requires the classical assumption \eqref{eq:usualassumption} to be satisfied with a better exponent $\beta>\alpha$, with a constant that blows up when approaching the degeneracies.
\begin{example}
\label{rem:H123model}Let $A\in C^{2}(I;\R^n)$ for some interval $I\subseteq\R$. 
\begin{enumerate}
\item Let $A\in C^{\infty}(I;\R)$, $n=1$. The valuation of $A$ at $v\in I$ is defined as $m_{A}(v)=\inf\{k\ge1:\,A^{(k+1)}(v)\ne0\}$, the degeneracy of $A$ on $I$ is $m_{A}:=\sup_{v\in I}m_{A}(v)$. If $0<m_{A}<\infty$ we say that $A$ is non-degenerate of order $m_{A}$. In this case \eqref{H1} is satisfied with $\alpha=1/m_{A}$ (cf.~\cite[Lemma 1]{BJ10}).
\item Let $a'$ be $\kappa$-Hölder continuous, i.e.\ $A\in C^{2+\kappa}(I;\R^n)$. Then \eqref{H2} is satisfied. 
\item Assume $n=1$, \eqref{Zfinite} and that for some $\tau\geq0$ and all $\lambda>0$ 
\[
\lambda^{\tau}\lesssim\inf_{v\in I,\,\dist(v,Z)\ge\lambda}|a'(v)|.
\]
Then $a$ satisfies \eqref{H3} with $\beta=1$. 
\item Let $A(v)=\sin(v)$ or $A(v)=\cos(v)$. Then $A$ satisfies \eqref{Zfinite}-\eqref{H3} with $\alpha=1/2$, $\beta=1$ and $\kappa=\tau=1$ (cf.\ Example \ref{ex:sin} below). 
\item Our model one-dimensional velocity field $a(v)=v^{\ell}$ satisfies \eqref{Zfinite}-\eqref{H3} with $\alpha=1/\ell$, $\beta=1$ and $\kappa=\tau=\ell-1$. 
\end{enumerate}
\end{example}
\begin{thm}
\label{thm:averaging} Let $a\in C^{1}(\R;\R^{n})$ satisfy \eqref{Zfinite}-\eqref{H3}. Let $p,q\in[1,2]$ with $p\geq q$, $\gamma\in[0,1]$ and $\sigma \in [0,1)$. Assume that $f\in L_{loc}^{p}(\R_{t}\times\R_{x}^{n};W_{loc}^{\sigma,p}(\R_{v}))$ solves the kinetic equation \eqref{eq:kinmultid} with 
\begin{equation}
h,(1+\dist(v,Z)^{-\gamma})g\in\begin{cases}
L_{loc}^{q}(\R_{t}\times\R_{x}^{n}\times\R_{v}) & \text{ if }q\in(1,p],\\
\mathcal{M}_{loc}(\R_{t}\times\R_{x}^{n}\times\R_{v}) & \text{ if }q=1.
\end{cases}\label{H4}
\end{equation}
Then, for any $\phi\in C_{c}^{\infty}(\R)$, the average $\bar{f}(t,x)=\int f(t,x,v)\phi(v)\,dv$ satisfies 
\[
\bar{f}\in W_{loc}^{s,r}(\R_{t}\times\R_{x}^{n})\qquad\forall s\in[0,s_{*}),
\]
where the order of differentiability $s_{*}$ is given by 
\begin{align*}
 & s_{*}=(1-\eta)\theta_{\alpha}+\eta\theta_{\beta},\\
 & \theta_{a}=\frac{a/\bar{p}}{a(1/\bar{p}-1/q')+2}\quad(a=\alpha,\beta),\qquad\eta=\frac{E_{1}}{E_{1}+E_{2}},\\
 & \bar{p}\in[\frac{p'}{1+\sigma p'},p']\cap(1,\infty),\\
 & E_{1}=\min\left((\kappa+\gamma),\frac{1}{\alpha}-(1-\gamma)\right)\theta_{\alpha},\\
 & E_{2}=\max\left(\frac{2\tau}{\beta}-\kappa-\gamma,\frac{\tau-1}{\beta}+1-\gamma,0\right)\theta_{\beta},
\end{align*}
and the order of integrability $r$ is given by 
\begin{align*}
\frac{1}{r}=\frac{1-\eta}{r_{\alpha}}+\frac{\eta}{r_{\beta}},\qquad\frac{1}{r_{a}}=\frac{1-\theta_{a}}{p}+\frac{\theta_{a}}{q}\quad(a=\alpha,\beta).
\end{align*}

\end{thm}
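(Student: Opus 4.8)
The plan is to run a Fourier-based kinetic averaging argument in the spirit of Lions--Perthame--Tadmor, refined by a decomposition of the velocity variable adapted to the degeneracy set $Z=\{a'=0\}$. After multiplying $f,g,h$ by suitable cutoffs in $(t,x,v)$ --- which produces only lower-order commutator terms that can be handled exactly as the principal terms --- I reduce to estimating $\bar f$ for compactly supported data and take the Fourier transform in the space-time variables $(t,x)$, writing $K=(\tau,\xi)$, $\abs{K}=(\tau^2+\abs{\xi}^2)^{1/2}$, and the kinetic symbol $L(v,K)=\tau+a(v)\cdot\xi$, so that the equation becomes $iL\,\widehat f=\partial_v\widehat g+\widehat h$. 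For each frequency $K$ I split the $v$-integral defining $\widehat{\bar f}(K)$ into a ``bad set'' $\{\abs{L}\le\delta\}$, where $\widehat f$ cannot be recovered from the equation and is estimated directly, and a ``good set'' $\{\abs{L}>\delta\}$, where I solve $\widehat f=(\partial_v\widehat g+\widehat h)/(iL)$ and integrate by parts in $v$ to move the derivative off $g$, producing the factor $\partial_v(1/L)=-(a'(v)\cdot\xi)/L^2$ together with boundary terms on $\{\abs{L}=\delta\}$.

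The new ingredient is to split both sets further according to the distance to $Z$ at a scale $\lambda$. Near $Z$, i.e.\ on $\{\dist(v,Z)\le\lambda\}$, I control the bad-set measure only by the weak non-degeneracy \eqref{H1}, giving $\lesssim\delta^\alpha$, whereas away from $Z$, on $\{\dist(v,Z)\ge\lambda\}$, I use the stronger \eqref{H3}, giving $\lesssim\lambda^{-\tau}\delta^\beta$. On the good set the degeneracy bound \eqref{H2}, $\abs{a'}\lesssim\lambda^\kappa$, makes the integration-by-parts term small in the near-$Z$ region, and the singular-moment integrability \eqref{H4} enters by writing $g=\dist(v,Z)^{\gamma}\bigl(\dist(v,Z)^{-\gamma}g\bigr)$, so that the weight $\dist(v,Z)^{\gamma}\le\lambda^\gamma$ combines with $a'(v)$ to tame the $1/L^2$ singularity near $Z$; this is exactly the gain exponent $\kappa+\gamma$ appearing in $E_1$. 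The velocity regularity $f\in W^{\sigma,p}_v$ is used, through a standard interpolation on the bad set, to replace the exponent $p'$ in the classical estimate by the improved $\bar p\in[p'/(1+\sigma p'),p']$, which is what yields $\theta_a$ with $\bar p$ in place of $p'$.

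With these bounds the proof reduces to a multi-parameter optimization: one chooses the truncation $\delta$ and the distance scale $\lambda$ as powers of $\abs{K}$, balancing the bad-set contributions $\delta^\alpha$ and $\lambda^{-\tau}\delta^\beta$ against the good-set contributions, which scale like $\delta^{-1}$ times the relevant measures together with the $(a'\cdot\xi)/L^2$ terms weighted by $\lambda^{\kappa+\gamma}$. The near-$Z$ balance produces the differentiability $\theta_\alpha$ and the far-$Z$ balance produces $\theta_\beta$; the quantity $E_1$ records the gain from the smallness of $a'$ and the weight near $Z$ (the $\min$ reflecting the competition between the good-set and the boundary/bad-set estimates there), while $E_2$ records the loss from the $\lambda^{-\tau}$ blow-up away from $Z$ (the $\max$ over three terms selecting the dominant loss). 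The optimal split of the velocity axis is then at $\eta=E_1/(E_1+E_2)$, giving $s_*=(1-\eta)\theta_\alpha+\eta\theta_\beta$. For the integrability I first establish the estimates in the $L^2$-based Plancherel setting ($p=q=2$) and then pass to general $p,q\in[1,2]$ by real interpolation between the direct $L^p$ bound on the bad set and the $L^q$ bound on the good set, carried out separately in the two regimes to produce $r_\alpha,r_\beta$ and hence $1/r=(1-\eta)/r_\alpha+\eta/r_\beta$; the case $q=1$ with measure-valued $g,h$ is obtained as the endpoint version of the same bounds.

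I expect the main obstacle to be the joint optimization over $\delta$ and $\lambda$ while simultaneously controlling the good-set integration-by-parts terms near $Z$: one must check that the gain from \eqref{H2} and the weight $\dist(v,Z)^{\gamma}$ precisely compensates both the $1/L^2$ singularity and the shrinking of the good set as $\lambda\to0$, and that the competing error terms collected in $E_2$ are genuinely the extremal ones in every range of the parameters $\alpha,\beta,\kappa,\tau,\gamma,\sigma$. Getting the bookkeeping of these exponents right, rather than any single hard inequality, is where the real work lies.
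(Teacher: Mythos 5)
Your proposal reproduces the paper's overall architecture — localization, a velocity split at scale $\lambda$ around $Z$, hypotheses \eqref{H1}/\eqref{H3} controlling the sets where the symbol is small, \eqref{H2} and the singular moment \eqref{H4} taming the integration-by-parts terms, and a final optimization in $\delta$ and $\lambda$ producing $\theta_\alpha,\theta_\beta,E_1,E_2,\eta$. However, two of your concrete mechanisms would fail. The first is the sharp cutoff into $\{\abs{L}\le\delta\}$ and $\{\abs{L}>\delta\}$. Integrating by parts in $v$ over $\{\abs{L}>\delta\}$ produces boundary terms that are pointwise-in-$v$ evaluations of $\widehat g$ on $\{\abs{L}=\delta\}$; these are meaningless when $g$ is merely $L^q$ in $v$, and a fortiori when $g$ is a measure — and the measure case $q=1$ is precisely the one needed for the application to conservation laws, so it cannot be waved through as "the endpoint version of the same bounds." Moreover, indicator-function frequency cutoffs of this conical type are not bounded $L^p$ multipliers for $p\neq2$ (in $x$-dimension $n\ge2$ this is a Fefferman-type obstruction), so the "direct $L^p$ bound on the bad set and $L^q$ bound on the good set" that your interpolation step invokes are simply not available. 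The paper circumvents both problems at once with the smooth dyadic partition $\psi_0(\cdot/\delta)+\sum_{j\ge1}\psi_1(\cdot/(2^j\delta))$ of the symbol (see \eqref{eq:f_decomp}): the derivative then falls on $\widetilde\psi_1$ rather than on a sharp edge, and $L^p$-boundedness of the localizations is supplied by the truncation property underlying Lemma~\ref{lem:Lpestim}. Relatedly, your interpolation logic is circular: one cannot deduce the general $(p,q)$ statement from the Plancherel case $p=q=2$, since the hypotheses on $f,g,h$ change with $(p,q)$; the real interpolation must be run on the decomposition of $\bar f$ itself via the $K$-functional, and then $\delta$ and $\lambda$ have to be chosen as powers of the interpolation parameter $t$ (as in Lemmas~\ref{lem:A1f} and \ref{lem:A2f} and the reiteration step), not as powers of the frequency $\abs{K}$, which is a move that only makes sense in the $L^2$ setting.

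The second gap is in the exponent bookkeeping away from $Z$. You use \eqref{H2} and the weight $\dist(v,Z)^\gamma$ only in the near-$Z$ region, and treat $\{\dist(v,Z)\ge\lambda\}$ with the single bound \eqref{H3}. That cannot produce the stated $E_2$: the term $\frac{2\tau}{\beta}-\kappa-\gamma$ shows that the smallness of $a'$ and the singular moment of $g$ must also be exploited in the far region. The paper does this by decomposing $\{\dist(v,Z)\ge\lambda\}$ into dyadic shells $\dist(v,Z)\sim\mu=2^k\lambda$, applying \eqref{H2} and \eqref{H4} at each scale (yielding factors $\mu^{\kappa+\gamma}$), optimizing scale by scale, and summing the resulting geometric series (Lemma~\ref{lem:A2f}). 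A single-scale treatment of the far region leaves the $a'$ factor and the weight of size $O(1)$ there and yields $\max\left(\frac{2\tau}{\beta},\,\dots\right)\theta_\beta$ in place of $\max\left(\frac{2\tau}{\beta}-\kappa-\gamma,\,\dots\right)\theta_\beta$, i.e.\ a strictly larger $E_2$ and hence a strictly smaller $s_*$ whenever $\kappa+\gamma>0$ — which is exactly the regime of interest (e.g.\ $a(v)=v^\ell$, $\kappa=\ell-1$, $\gamma\approx1$).
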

The proof of Theorem~\ref{thm:averaging} consists in splitting the velocity average into velocities which are close to the degeneracy set $\{v\in\R:\,\dist(v,Z)\leq\lambda\}$ and far away from it $\{v\in\R:\,\dist(v,Z)\geq\lambda\}$. Close to $Z$, assumption \eqref{H1} only allows us to obtain a differentiability of order $\theta_{\alpha}$ by arguing as in \cite{lions_perthame_tadmor_94}, but assumption \eqref{H2} allows us (in the spirit of \cite{tadmor_tao_07}) to estimate the corresponding norms with $\lambda^{E_{1}}$. Away from $Z$, assumption \eqref{H3} allows us to obtain differentiability of the better order $\theta_{\beta}$, with a corresponding estimate in $\lambda^{-E_{2}}$. Then optimizing the choice of $\lambda$ yields the conclusion.

Proposition \ref{thm:scalarconservationlaw} is an immediate consequence of the following result.
\begin{thm}
\label{thm:scl} Let $A\in C^{2}(\R;\R^{n})$ satisfy \eqref{Zfinite}-\eqref{H3}, $u_{0}\in L^{1}(\R_{x}^{n})$, $S\in L^{1}([0,T]\times\R_{x}^{n})$ and $u(t,x)$ be an entropy solution of \eqref{eq:intro_SCL} with associated kinetic function $f$ as in \eqref{eq:intro_kin_fct}. Then, for all $\phi\in C_{c}^{\infty}(\R)$,
\[
\int f(t,x,v)\phi(v)\,dv\in W_{loc}^{s,r}((0,T)\times\R_{x}^{n})\quad\forall s<s_{*},
\]
where 
\begin{align*}
 & s_{*}=(1-\eta)\theta_{\alpha}+\eta\theta_{\beta},\\
 & \theta_{a}=\frac{a}{a+2},\quad(a=\alpha,\beta),\quad\eta=\frac{E_{1}}{E_{1}+E_{2}},\\
 & E_{1}=\min\left(\kappa+1,\frac{1}{\alpha}\right)\theta_{\alpha},\\
 & E_{2}=\max\left(\frac{2\tau}{\beta}-\kappa-1,\frac{\tau-1}{\beta},0\right)\theta_{\beta},
\end{align*}
and the order of integrability $r$ is given by 
\begin{align*}
\frac{1}{r}=\frac{1-\eta}{r_{\alpha}}+\frac{\eta}{r_{\beta}},\qquad\frac{1}{r_{a}}=\frac{1+\theta_{a}}{2},\quad(a=\alpha,\beta).
\end{align*}
 In particular, if $u_{0}\in L^{\infty}(\R^{n})$ and $S\in L^{\infty}([0,T]\times\R^{n})$ then 
\[
u\in W_{loc}^{s,r}((0,T)\times\R_{x}^{n})\quad\forall s<s_{*}.
\]
\end{thm}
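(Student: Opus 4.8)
The plan is to deduce Theorem~\ref{thm:scl} from the general averaging Lemma, Theorem~\ref{thm:averaging}, applied to the kinetic function $f$ of \eqref{eq:intro_kin_fct}. Recall that $f$ solves the kinetic equation \eqref{eq:kin}, which is precisely \eqref{eq:kinmultid} with velocity field $a=A'$, right-hand side $g=m$ (the nonnegative entropy dissipation measure) and $h=\delta_{v=u}S$. The hypotheses \eqref{Zfinite}--\eqref{H3} on $a$ are assumed, so the whole task is to verify the remaining structural hypotheses of Theorem~\ref{thm:averaging} and then to match exponents. I would fix $p=2$ and $q=1$, which is forced by the shape of the exponents: with $q=1$ one has $1/q'=0$, and letting $\bar p\downarrow1$ (admissible as soon as $\sigma\uparrow 1/2$, see below) the quantity $\theta_a=\frac{a/\bar p}{a(1/\bar p-1/q')+2}$ tends to $\frac{a}{a+2}$, while $\frac1{r_a}=\frac{1-\theta_a}{2}+\theta_a=\frac{1+\theta_a}{2}$; taking moreover $\gamma\uparrow1$ sends $E_1,E_2$ to the values in Theorem~\ref{thm:scl}. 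All these quantities depend continuously on $(\sigma,\bar p,\gamma)$, so it suffices to verify the hypotheses of Theorem~\ref{thm:averaging} for each $\sigma<1/2$ and each $\gamma<1$ and then pass to the limit; since the conclusion is stated for all $s<s_*$, this yields the theorem.

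First, the velocity regularity. For fixed $(t,x)$ the map $v\mapsto f(t,x,v)$ is, up to sign, the indicator of the interval between $0$ and $u(t,x)$; it is bounded by $1$ and has total variation at most $2$. A single-jump indicator belongs to $W^{\sigma,2}$ of any bounded $v$-interval for every $\sigma<1/2$, with norm controlled by the interval and $\sigma$ alone. Hence $f\in L^\infty_{loc}(\R_t\times\R^n_x;W^{\sigma,2}_{loc}(\R_v))\subset L^2_{loc}(\R_t\times\R^n_x;W^{\sigma,2}_{loc}(\R_v))$ for every $\sigma<1/2$, which is the required input with $p=2$. Next, $h=\delta_{v=u}S\in\mathcal{M}_{loc}$ since $S\in L^1$, so that $\abs h$ has finite mass $\int\abs S$. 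The remaining, and central, point is the weighted bound in \eqref{H4}: I must show $(1+\dist(v,Z)^{-\gamma})m\in\mathcal{M}_{loc}$ for every $\gamma<1$. As $m\ge0$ is already locally finite and $\dist(v,Z)^{-\gamma}$ is bounded away from $Z$, the only issue is the local finiteness of the singular moment $\int\dist(v,Z)^{-\gamma}\,dm$ near $Z$.

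This is where the sign of $m$ enters, and it is the main obstacle. I would extract the bound from the entropy dissipation identity. Testing \eqref{eq:kin} against $\varphi(t,x)\eta'(v)$ with $\varphi\in C_c^\infty$, $\varphi\ge0$, and a weight $\eta$, and using $\int\eta'(v)f\,dv=\eta(u)-\eta(0)$ together with $q'=a\eta'$, gives
\[
\iint\varphi(t,x)\,\eta''(v)\,dm=\iint\bigl(\varphi\,\eta'(u)\,S+\partial_t\varphi\,\eta(u)+\nabla_x\varphi\cdot q(u)\bigr)\,dt\,dx.
\]
I would choose $\eta$ with $\eta''=\dist(\cdot,Z)^{-\gamma}$ on the bounded $v$-window of interest (where, by \eqref{Zfinite}, $Z$ is finite), and complete $\eta$ so that $\eta'$ is compactly supported; then $\eta,\eta',q$ are bounded, so the right-hand side is controlled by $\norm{\eta'}_\infty\int\abs S+C(\norm{u}_{L^1_{loc}}+1)$ and is finite for merely $L^1$ data. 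To make this rigorous I would first truncate $\eta''$ at height $\e^{-\gamma}$, apply the identity (now with $\eta'\in C_c^\infty$), and let $\e\to0$: monotone convergence on the region where $\eta''\ge0$, together with the uniform bound on the right-hand side, yields $\int_K\dist(v,Z)^{-\gamma}\,dm<\infty$ on every compact $K$, the negative part of $\eta''$ (supported away from $Z$, hence bounded) being absorbed into the finite mass of $m$. This establishes \eqref{H4} with $q=1$ for every $\gamma<1$.

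With all hypotheses of Theorem~\ref{thm:averaging} verified for each $\sigma<1/2$ and $\gamma<1$, I obtain $\bar f=\int f\phi\,dv\in W^{s,r(\sigma,\gamma)}_{loc}$ for all $s<s_*(\sigma,\gamma)$, and passing to the limit $\sigma\uparrow1/2$, $\bar p\downarrow1$, $\gamma\uparrow1$ recovers the stated pair $(s_*,r)$ (the limiting integrability being handled by a standard embedding on bounded domains, since $r(\sigma,\gamma)\to r$ from above along these parameters). Finally, for the $L^\infty$ statement, the maximum principle for entropy solutions gives $\norm{u(t)}_\infty\le\norm{u_0}_\infty+\int_0^t\norm{S(s)}_\infty\,ds$, so $f$ is supported in $\{\abs v\le M\}$ for some $M$; choosing $\phi\in C_c^\infty$ with $\phi\equiv1$ on $[-M,M]$ and using $\int_\R f\,dv=u$ identifies $u=\bar f\in W^{s,r}_{loc}$, completing the proof.
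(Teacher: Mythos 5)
Your proposal is correct and follows essentially the same route as the paper: it applies Theorem~\ref{thm:averaging} with $p=2$, $q=1$, $\sigma\approx\frac12$, $\gamma\approx1$, $\bar p\approx1$, obtains the $W^{\sigma,2}$ velocity regularity from the indicator structure of $f$ (the paper gets this by interpolating $L^1\cap L^\infty$ with $L^\infty_{t,x}(BV_v)$), and proves the singular-moment bound $\dist(v,Z)^{-\gamma}m\in\mathcal{M}_{loc}$ by precisely the mechanism of the paper's Proposition~\ref{prop:mass_bound}, namely testing the kinetic equation against a bounded antiderivative of the singular weight and exploiting $m\ge0$ via a monotone limit. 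The only cosmetic difference is that the paper first localizes in time with a cutoff $\eta\in C_c^\infty(0,T)$ so that the kinetic equation holds on all of $\R_t\times\R_x^n\times\R_v$ before invoking Theorem~\ref{thm:averaging}; your write-up leaves this routine step implicit.
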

\begin{example}\label{ex:sin}
Consider \eqref{eq:intro_SCL} with flux $A(v)=\sin(v)$ or $A(v)=\cos(v)$, $u_{0}\in L^{\infty}(\R)$ and $S\in L^{\infty}([0,T]\times\R)$. Then 
\[
u\in W_{loc}^{s,r}((0,T)\times\R)\quad\forall s<\frac{1}{3},\,r\le\frac{3}{2},
\]
despite the existence of degeneracy points, i.e.~$\{v\in\R:\,A''(v)=0\}\ne\emptyset$. This improves the previously known regularity of $s<\frac{1}{5}$.
\end{example}

\subsection*{Notation}

We will use the symbol $\lesssim$ to denote inequality up to a constant that does not depend on the interpolation parameters $\lambda$ and $\delta$. Further, $\mathcal{F}=\mathcal{F}_{t,x}$ denotes the Fourier transform in the $(t,x)$ variables and for $(\tau,\xi)\in\R^{n+1}$ let 
\[
(\tau',\xi'):=\frac{1}{\sqrt{\tau^{2}+|\xi|^{2}}}(\tau,\xi),
\]
so that $(\tau')^{2}+|\xi'|{}^{2}=1$. For $p \ge 1$ we let $p'$ be the conjugate exponent, that is, $\frac{1}{p}+ \frac{1}{p'}=1$. For $Z\subseteq\R$, $\dist(v,Z) := \inf_{z\in Z} |v-z|$.

\subsection*{Structure of the paper}

The plan of the paper is as follows. In Section~\ref{s:prelim} we present the proof of the main results Theorem~\ref{thm:averaging} and Theorem \ref{thm:scl}. Some background material on scalar conservation laws with an $L^{1}$-force is presented in Appendix \ref{sec:kinetic_solutions}. In Appendix \ref{sec:A-basic-estimate} we recall a basic $L^{p}$ estimate for Fourier multipliers.

\section{Proofs of the main results\label{s:prelim}}

\subsection*{Reduction to $Z\cap\supp\phi=\lbrace0\rbrace$ and localization}

If $Z\cap\supp\phi=\emptyset$ then Theorem~\ref{thm:averaging} does not improve on \cite{lions_perthame_tadmor_94}, so we may assume that $Z\cap\supp\phi$ contains at least one element. If $Z\cap\supp\phi=\lbrace v_{1},\ldots,v_{N}\rbrace$, we may choose a smooth partition of unity $\phi_{1}(v)+\cdots+\phi_{N}(v)=1$ such that $Z\cap\supp\phi_{j}=\lbrace v_{j}\rbrace$ for all $j\in\lbrace1,\ldots,N\rbrace$. Since $\bar{f}=\bar{f}_{1}+\cdots+\bar{f}_{N}$ with $\bar{f}_{j}=\int f(t,x,v)\phi(v)\phi_{j}(v)\,dv$, it suffices to prove Theorem~\ref{thm:averaging} in the case where $Z\cap\supp\phi$ contains exactly one element. Translating $v$, we may moreover assume that this element is $0$.

Note that we may moreover assume that $f,g,h$ have compact support: for $\phi(t,x,v)$ smooth and compactly supported, the function $\tilde{f}=\phi f$ is compactly supported and satisfies 
\begin{equation}
\partial_{t}\tilde{f}+a(v)\cdot\nabla_{x}\tilde{f}=\partial_{v}\tilde{g}+\tilde{h},\label{eq:loc_kin}
\end{equation}
with 
\begin{equation*}\begin{split}
  \tilde h &= f \partial_t \phi + a(v)f\nabla_x \phi - (\partial_v \phi) g + h\phi\\
  \tilde g &= \phi g.
\end{split}\end{equation*}
We note that $\tilde{h},\tilde{g}$ are compactly supported and satisfy \eqref{H4} since $q\leq p$.

Hence, the assumptions \eqref{H2}-\eqref{H4} become 
\begin{align}
 & \sup_{v\in I,\,\abs{v}\leq\lambda}\abs{a'(v)}\lesssim\lambda^{\kappa},\label{H2:0}\\
 & \sup_{\tau^{2}+\abs{\xi}^{2}=1}\abs{\left\lbrace v\in I\colon\abs{v}\geq\lambda,\,\abs{\tau+a(v)\cdot\xi}\leq\delta\right\rbrace }\lesssim\lambda^{-\tau}\delta^{\beta},\label{H3:0}\\
 & h,(1+\abs{v}^{-\gamma})g\in\begin{cases}
L^{q}(\R_{t}\times\R_{x}^{n}\times\R_{v}) & \text{ if }q\in(1,2],\\
\mathcal{M}(\R_{t}\times\R_{x}^{n}\times\R_{v}) & \text{ if }q=1.
\end{cases}\label{H4:0}
\end{align}

\subsection*{Separating small and large velocities}

We fix a bounded interval $I\subset [-\Lambda,\Lambda]\subset\R$ and a bump function $\phi\in C_{c}^{\infty}(I)$. We further fix a cut-off function $\eta_{1}\in C_{c}^{\infty}(\R)$ satisfying 
\begin{align*}
  \eta(v) &\in [0,1] \text{ for all } v\in \R,\\
  \eta_{1}(v)&\equiv1\text{ for }\abs{v}\leq1,\quad \eta_{1}(v)\equiv0\text{ for }\abs{v}\geq2.
\end{align*}
Then we set $\eta_{2}:=1-\eta_{1}$, so that for any $\lambda>0$ it holds 
\begin{equation}
\begin{split}\label{eq:f_l_split}
\bar{f}(t,x) & =\int f(t,x,v)\phi(v)\eta_{1}(\frac{v}{\lambda})\:dv+\int f(t,x,v)\phi(v)\eta_{2}(\frac{v}{\lambda})\:dv\\
 & =:A_{1}^{\lambda}f+A_{2}^{\lambda}f.
\end{split}
\end{equation}
Note that for all $\lambda\geq\Lambda$ we have $A^\lambda_{1}f=\bar f$ and $A_2^\lambda f = 0$ so that in the sequel we will only need to consider $\lambda\leq \Lambda$.

Since $A_{2}^{\lambda}f$ does not see small velocities, we could use assumption \eqref{H3:0} and obtain from \cite{lions_perthame_tadmor_94} that $A_{2}^{\lambda}f$ has differentiability of order $s=\theta_{\beta}$. In contrast, for $A_{1}^{\lambda}f$ we can only use \eqref{H1} to see that it has differentiability of order $s=\theta_{\alpha}<\theta_{\beta}$. But our assumptions allow us to take advantage of the fact that $A_{1}^{\lambda}f$ only sees small velocities in two ways: first, by using that $a'(v)$ is small thanks to \eqref{H2:0} -- along the idea that led to introducing the assumption \eqref{eq:tadmortaoassumption} in \cite{tadmor_tao_07}; and second, by using the finite singular moment assumption \eqref{H4:0} on $g$. That way we find that the estimate for $A_{1}^{\lambda}f$ comes with a constant that goes to zero when $\lambda$ approaches zero (cf.~Lemma \ref{lem:A1f} below). On the other hand, the estimate for $A_{2}^{\lambda}f$ comes with a constant that blows up when $\lambda$ approaches zero (cf.~Lemma \ref{lem:A2f} below).
\begin{lem}
\label{lem:A1f} For all $s\in[0,\theta_{\alpha})$ there exists a constant $C>0$ such that for any $\lambda\leq \Lambda$ it holds 
\[
\norm{A_{1}^{\lambda}f}_{W^{s,r_{\alpha}}}\leq C\lambda^{E_{1}},
\]
where $E_{1}$ is given by 
\[
E_{1}=\min\left((\kappa+\gamma),\frac{1}{\alpha}-(1-\gamma)\right)\theta_{\alpha}.
\]
\end{lem}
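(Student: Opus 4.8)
The plan is to run the Fourier-multiplier averaging argument of \cite{lions_perthame_tadmor_94} on the truncated average $A_{1}^{\lambda}f=\int f\,\phi(v)\eta_{1}(v/\lambda)\,dv$, but keeping careful track of the two gains that truncating to $\abs{v}\lesssim\lambda$ provides. First I would Fourier transform the localized equation \eqref{eq:loc_kin} in the $(t,x)$ variables; writing $R=\sqrt{\tau^{2}+\abs{\xi}^{2}}$ and $(\tau',\xi')=(\tau,\xi)/R$, the equation becomes the pointwise-in-$v$ relation $iR(\tau'+a(v)\cdot\xi')\hat{f}=\partial_{v}\hat{g}+\hat{h}$. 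Fixing a threshold $\delta=\delta(R)$ to be optimized and a smooth cut-off $\chi$, I split
\[
\widehat{A_{1}^{\lambda}f}=\int\hat{f}\,\phi\,\eta_{1}(\tfrac{v}{\lambda})\,\chi\big(\tfrac{\tau'+a(v)\cdot\xi'}{\delta}\big)\,dv+\int\hat{f}\,\phi\,\eta_{1}(\tfrac{v}{\lambda})\,\big(1-\chi(\tfrac{\tau'+a(v)\cdot\xi'}{\delta})\big)\,dv=:T_{1}+T_{2},
\]
where $T_{1}$ is supported on the small set $\{\abs{\tau'+a\cdot\xi'}\leq\delta\}$ and $T_{2}$ on its complement.

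For $T_{1}$ I would use that on the support of $\eta_{1}(\cdot/\lambda)$ the velocity set $\{\abs{v}\leq 2\lambda,\ \abs{\tau'+a\cdot\xi'}\leq\delta\}$ has measure $\lesssim\min(\lambda,\delta^{\alpha})$ by \eqref{H1}, and estimate the integral by this measure (to a Hölder power governed by $\bar{p}$, using the $W^{\sigma,p}_{v}$ regularity of $f$) against the $v$-norm of $\hat f$. For $T_{2}$, on the complement the symbol is invertible with $\abs{\tau'+a\cdot\xi'}\gtrsim\delta$, so I write $\hat{f}=(\partial_{v}\hat{g}+\hat{h})/(iR(\tau'+a\cdot\xi'))$ and integrate by parts in $v$ in the $\partial_{v}\hat{g}$ term. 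This is where the two gains enter: whenever $\partial_{v}$ falls on the symbol it produces a factor $a'(v)\cdot\xi'$, bounded by $\lambda^{\kappa}$ through \eqref{H2:0}; and since the integrand is supported in $\abs{v}\lesssim\lambda$, writing $\abs{\hat{g}}\leq(2\lambda)^{\gamma}\abs{v}^{-\gamma}\abs{\hat{g}}$ turns the singular-moment assumption \eqref{H4:0} into an extra factor $\lambda^{\gamma}$ (the boundary-type term in which $\partial_{v}$ falls on $\eta_{1}(\cdot/\lambda)$, localized to $\abs{v}\sim\lambda$, is treated by the same gain). Collecting the contributions yields a frequency bound of the schematic form
\[
\abs{\widehat{A_{1}^{\lambda}f}}\lesssim\min(\lambda,\delta^{\alpha})^{1/\bar{p}}\,\norm{\hat{f}}_{W^{\sigma,p}_{v}}+\frac{\lambda^{1/q'}}{R\delta}\,\norm{\hat{h}}_{L^{q}_{v}}+\Big(\frac{\lambda^{\kappa+\gamma+1/q'}}{R\delta^{2}}+\frac{\lambda^{\gamma-1+1/q'}}{R\delta}\Big)\,\norm{\abs{v}^{-\gamma}\hat{g}}_{L^{q}_{v}}.
\]

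Finally I would optimize $\delta$ as a function of $R$, balancing the small-set contribution $\delta^{\alpha/\bar{p}}$ against the symbol-inversion contribution $\delta^{-2}$, and pass from these frequency-localized estimates to the $W^{s,r_{\alpha}}$ bound, for every $s<\theta_{\alpha}$, via the Fourier-multiplier estimate recalled in Appendix~\ref{sec:A-basic-estimate}, interpolating between the $L^{p}$ bound on $f$ and the $L^{q}$ (or, for $q=1$, measure) bound on $g,h$; this interpolation is exactly what produces the exponents $\theta_{\alpha}$ and $r_{\alpha}$. Tracking the powers of $\lambda$ through the optimization, the competition between the symbol-differentiated term (which carries $\lambda^{\kappa+\gamma}$) and the regime in which the whole interval $\abs{v}\leq\lambda$ lies in the small set (so that $\min(\lambda,\delta^{\alpha})=\lambda$ replaces $\delta^{\alpha}$, combined with the $\lambda^{\gamma}$ gain and the measure exponent $\tfrac1\alpha$) produces the stated $E_{1}=\min\big(\kappa+\gamma,\tfrac1\alpha-(1-\gamma)\big)\theta_{\alpha}$, the $\min$ recording which regime dominates. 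I expect the main obstacle to be the bookkeeping in the integration by parts: carrying the $\lambda^{\kappa}$ gain from \eqref{H2:0} and the $\lambda^{\gamma}$ singular-moment gain simultaneously, while performing the $L^{p}$--$L^{q}$ interpolation and correctly handling both the $q=1$ endpoint (where $g,h$ are measures) and the fractional $W^{\sigma,p}_{v}$ regularity of $f$ that enters through $\bar{p}$.
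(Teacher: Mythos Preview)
Your plan captures both key gains---the $\lambda^\kappa$ bound on $a'$ from \eqref{H2:0} and the $\lambda^\gamma$ singular-moment gain from \eqref{H4:0}---and your closing heuristic correctly anticipates the form of $E_1$, but two technical points in your execution differ from the paper, and the first would not deliver the stated $\theta_\alpha$ when $q>1$.

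For the large-symbol piece $T_2$ you use the single cutoff $1-\chi$ and later invoke Lemma~\ref{lem:Lpestim}. That lemma, however, requires the multiplier to be $\psi(m/\delta)$ with $\psi$ a \emph{compactly supported} smooth cutoff; the function $(1-\chi(z))/z$ (or $/z^2$) arising after inversion and integration by parts is not of this form. The paper therefore decomposes the large-symbol region dyadically, $f^1=\sum_{j\ge1}f^{(j)}$, so that each $f^{(j)}$ carries a multiplier $\tilde\psi_1(m/(2^j\delta))$ supported where $\abs{m}\sim 2^j\delta$. Applying Lemma~\ref{lem:Lpestim} with $p=q$, $\sigma=0$, $r=q'$ to each piece gives a factor $(2^j\delta)^{\alpha/q'}$ from \eqref{H1}; summing over $j$ produces the $\delta^{\alpha/q'}$ in \eqref{estim:A1f1}. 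Your H\"older step over $\{\abs v\le 2\lambda\}$ gives $\lambda^{1/q'}$ instead, and balancing $\delta^{\alpha/\bar p}$ against $t\,\delta^{-2}$ then yields the exponent $\alpha/(\alpha+2\bar p)$ rather than $\theta_\alpha=\alpha/(\alpha-\alpha\bar p/q'+2\bar p)$. The two agree only at $q=1$.

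Second, the paper does not optimize $\delta$ as a function of the frequency $R$; it works directly with the $K$-functional of the pair $(L^p,\dot W^{1,q})$ and sets $\delta=t^a\lambda^b$ as a function of the interpolation parameter $t$. After the first balancing in $t$, the contribution $\delta^{\alpha/q'-1}\lambda^{\gamma-1}$ still carries a residual positive power of $t$; it is controlled by introducing a further threshold $\mu$ and invoking the crude bound $\norm{A_1^\lambda f}_{L^p}\lesssim\lambda^{1/\bar p}$ for $t\ge\mu$. Only after optimizing $\mu$ does one obtain three affine functions $L_1,L_2,L_3$ of $b$, and $E_1$ is the smaller of the two values of the increasing $L_1$ at its intersections with $L_2$ and $L_3$. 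Your ``regime $\min(\lambda,\delta^\alpha)=\lambda$'' is the right intuition for the second intersection, but the actual mechanism is this two-parameter $(\mu,b)$ optimization rather than a single choice of $\delta(R)$.
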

\begin{proof}
The proof will follow the strategy of \cite[Averaging Lemma~2.1]{tadmor_tao_07}, the main difference residing in the fact that we want to keep track of the dependence on $\lambda$ of all the estimates.

We fix $\psi_{0}(z)$ supported in $\abs{z}\leq2$ and $\psi_{1}(z)$ supported in $1/2\leq\abs{z}\leq2$ such that 
\[
1\equiv\psi_{0}(z)+\sum_{j\geq1}\psi_{1}(2^{-j}z),\quad\forall z\in\C.
\]
For any $\delta>0$ we decompose $f$ as 
\begin{equation}\label{eq:f_decomp_1}
  f=f^{0}+f^{1},
\end{equation}
where 
\begin{align}
f^{0} & =\mathcal{F}^{-1}\psi_{0}\left(\frac{i\tau'+ia(v)\cdot\xi'}{\delta}\right)\mathcal{F}f,\label{eq:f_decomp}\\
f^{1} & =\sum_{j\geq1}\mathcal{F}^{-1}\psi_{1}\left(\frac{i\tau'+ia(v)\cdot\xi'}{2^{j}\delta}\right)\mathcal{F}f=\sum_{j\geq1}f^{(j)}.\nonumber 
\end{align}
Then we estimate the $L{}^{p}$ norm of $A_1^{\lambda}{f^{0}}$ and the $\dot{W}^{1,q}$ norm of $A_1^{\lambda}{f^{1}}$ and conclude using real interpolation.

We treat first the case $q>1$. Invoking Lemma~\ref{lem:Lpestim} and using \eqref{H1} we have 
\begin{align}
\norm{A_{1}^{\lambda}f^{0}}_{L^{p}} & \lesssim\sup_{\tau^{2}+\abs{\xi}^{2}=1}\abs{\left\lbrace v\in I\colon\abs{v}\leq2\lambda,\,\abs{\tau+a(v)\cdot\xi}\leq2\delta\right\rbrace }^{1/\bar{p}}\nonumber \\
 & \lesssim\min(\delta^{\alpha},\lambda)^{1/\bar{p}}.\label{estim:A1f0}
\end{align}
Using \eqref{eq:kin} in Fourier variables yields, for all $(\tau,\xi,v) \in \R^{2+n}$ such that $\tau'+i\xi'\cdot a(v)\ne 0$,
\begin{align*}
  \mathcal{F} f 
  &= \frac{1}{|(\tau,\xi)|}\frac{1}{i\tau'+i\xi'\cdot a(v)} \mathcal{F} (\partial_vg+h)\\
  &= \mathcal{F} (-\Delta_{t,x})^{-1/2} \mathcal{F}^{-1} \frac{1}{i\tau'+i\xi'\cdot a(v)} \mathcal{F} (\partial_vg+h).
\end{align*}
Hence, setting $\widetilde{\psi}_{1}(z):=\psi_{1}(z)/z$ we find that for $j\geq1$ we have
\begin{align*}
  \mathcal{F}(-\Delta_{t,x})^{1/2}A_{1}^{\lambda}f^{(j)} &=\frac{1}{2^{j}\delta}\int\widetilde{\psi}_{1}\left(\frac{i\tau'+i a(v)\cdot\xi'}{2^{j}\delta}\right)\mathcal{F}\partial_{v}g\,\phi(v)\eta_{1}\left(\frac{v}{\lambda}\right)\,dv\\
 & \quad+\frac{1}{2^{j}\delta}\int\widetilde{\psi}_{1}\left(\frac{i\tau'+i a(v)\cdot\xi'}{2^{j}\delta}\right)\mathcal{F}h\,\phi(v)\eta_{1}\left(\frac{v}{\lambda}\right)\,dv.
\end{align*}
Integrating by parts thus yields
\begin{align*}
 & \mathcal{F}(-\Delta_{t,x})^{1/2}A_{1}^{\lambda}f^{(j)}\\
 & =-\frac{1}{(2^{j}\delta)^{2}}\int\widetilde{\psi}_{1}'\left(\frac{i\tau'+i a(v)\cdot\xi'}{2^{j}\delta}\right)ia'(v)|v|^\gamma\cdot\xi'\mathcal{F}|v|^{-\gamma} g\,\phi(v)\eta_{1}\left(\frac{v}{\lambda}\right)\,dv\\
 & \quad-\frac{1}{2^{j}\delta\lambda}\int\widetilde{\psi}_{1}\left(\frac{i\tau'+i a(v)\xi'}{2^{j}\delta}\right)|v|^{\gamma}\mathcal{F}|v|^{-\gamma}g\,\phi(v)\eta_{1}'\left(\frac{v}{\lambda}\right)\,dv\\
 & \quad-\frac{1}{2^{j}\delta}\int\widetilde{\psi}_{1}\left(\frac{i\tau'+i a(v)\xi'}{2^{j}\delta}\right)|v|^{\gamma}\mathcal{F}|v|^{-\gamma}g\,\phi'(v)\eta_{1}\left(\frac{v}{\lambda}\right)\,dv\\
 & \quad+\frac{1}{2^{j}\delta}\int\widetilde{\psi}_{1}\left(\frac{i\tau'+i a(v)\cdot\xi'}{2^{j}\delta}\right)\mathcal{F}h\,\phi(v)\eta_{1}\left(\frac{v}{\lambda}\right)\,dv.
\end{align*}
Invoking Lemma~\ref{lem:Lpestim} with $p=q, \sigma=0, r=q'$, recalling that $\xi'$ is a bounded $L^{q}$ multiplier, that $\abs{v}^{-\gamma}g\in L^{q}$ and using \eqref{H2:0}, we deduce 
\begin{align*}
\norm{A_{1}^{\lambda}f^{(j)}}_{\dot{W}^{1,q}} & \lesssim2^{-2j}\delta^{-2}\lambda^{\kappa+\gamma}(2^{j}\delta)^{\frac{\alpha}{q'}}+2^{-j}\delta^{-1}\lambda^{\gamma-1}(2^{j}\delta)^{\frac{\alpha}{q'}}\\
 & \quad+2^{-j}\delta^{-1}\lambda^{\gamma}(2^{j}\delta)^{\frac{\alpha}{q'}}+2^{-j}\delta^{-1}(2^{j}\delta)^{\frac{\alpha}{q'}}\\
 & \lesssim2^{-2j}\delta^{-2}\lambda^{\kappa+\gamma}(2^{j}\delta)^{\frac{\alpha}{q'}}+2^{-j}\delta^{-1}\lambda^{\gamma-1}(2^{j}\delta)^{\frac{\alpha}{q'}}.
\end{align*}
In the second inequality we were able to discard the two last terms in the previous line because $\lambda\lesssim1$ and $\gamma\leq1$. Since $\alpha < q'$, summing over $j\geq1$ yields 
\begin{equation}
\norm{A_{1}^{\lambda}f^{1}}_{\dot{W}^{1,q}}\lesssim\delta^{-2+\frac{\alpha}{q'}}\lambda^{\kappa+\gamma}+\delta^{-1+\frac{\alpha}{q'}}\lambda^{\gamma-1}.\label{estim:A1f1}
\end{equation}
From \eqref{estim:A1f0}-\eqref{estim:A1f1} we obtain for all $t>0$ that 
\begin{align*}
K(t,A_{1}^{\lambda}f) & :=\inf_{A^\lambda_{1}f=\tilde{f}^{0}+\tilde{f}^{1}}\left(\norm{\tilde{f}^{0}}_{L^{p}}+t\norm{\tilde{f}^{1}}_{\dot{W}^{1,q}}\right)\\
 & \lesssim\delta^{\frac{\alpha}{\bar{p}}}+t\delta^{\frac{\alpha}{q'}-2}\lambda^{\kappa+\gamma}+t\delta^{\frac{\alpha}{q'}-1}\lambda^{\gamma-1}.
\end{align*}
Next we optimize in $\delta$. We choose it of the form $\delta=t^{a}\lambda^{b}$, where $b$ will be chosen later and $a$ is determined by balancing the powers of $t$ in the first two terms: 
\[
a\frac{\alpha}{\bar{p}}=1+a\left(\frac{\alpha}{q'}-2\right)\quad\text{i.e.}\quad a=\frac{\bar{p}}{\alpha}\theta_{\alpha}.
\]
This gives 
\begin{align*}
t^{-\theta_{\alpha}}K(t,A_{1}^{\lambda}f)\lesssim\lambda^{b\frac{\alpha}{\bar{p}}}+\lambda^{b\left(\frac{\alpha}{q'}-2\right)+\kappa+\gamma}+t^{\frac{\bar{p}}{\alpha}\theta_{\alpha}}\lambda^{b\left(\frac{\alpha}{q'}-1\right)+\gamma-1}.
\end{align*}
Note that the last term is small for small $t$. On the other hand for large $t$ we can use the fact (obtained from \eqref{estim:A1f0} by sending $\delta\to\infty$) that 
\[
\norm{A_{1}^{\lambda}f}_{L^{p}}\lesssim\lambda^{\frac{1}{\bar{p}}},
\]
to deduce, for any $\mu>0$, 
\[
t^{-\theta_{\alpha}}K(t,A_{1}^{\lambda}f)\lesssim\begin{cases}
\lambda^{b\frac{\alpha}{\bar{p}}}+\lambda^{b\left(\frac{\alpha}{q'}-2\right)+\kappa+\gamma}+\mu^{\frac{\bar{p}}{\alpha}\theta_{\alpha}}\lambda^{b\left(\frac{\alpha}{q'}-1\right)+\gamma-1} & \text{ for }t\leq\mu,\\
\mu^{-\theta_{\alpha}}\lambda^{\frac{1}{\bar{p}}} & \text{ for }t\geq\mu.
\end{cases}
\]
Next we choose $\mu$ in order to balance the last terms of the above two lines, i.e. 
\[
\mu=\lambda^{\frac{\alpha}{\alpha+\bar{p}}\frac{1}{\theta_{\alpha}}\left(\frac{1}{\bar{p}}+b\left(1-\frac{\alpha}{q'}\right)+1-\gamma\right)},
\]
and conclude that 
\[
t^{-\theta_{\alpha}}K(t,A_{1}^{\lambda}f)\lesssim\lambda^{b\frac{\alpha}{\bar{p}}}+\lambda^{b\left(\frac{\alpha}{q'}-2\right)+\kappa+\gamma}+\lambda^{\frac{1}{\alpha+\bar{p}}\left(1-\alpha(1-\gamma)-b\alpha\left(1-\frac{\alpha}{q'}\right)\right)}.
\]
Finally we want to choose $b$ to optimize the above powers of $\lambda$ : set 
\[
E_{1}:=\sup_{b\in\R}\;\min\left\lbrace \begin{gathered}\frac{\alpha}{\bar{p}}b\\
\kappa+\gamma-\left(2-\frac{\alpha}{q'}\right)b\\
\frac{1}{\alpha+\bar{p}}\left(1-\alpha(1-\gamma)-\alpha\left(1-\frac{\alpha}{q'}\right)b\right)
\end{gathered}
\right\rbrace .
\]
We denote by $L_{1}(b),L_{2}(b),L_{3}(b)$ the three affine functions of $b$ appearing in the definition of $E_{1}$. Since $L_{1}$ is increasing and $L_{2},L_{3}$ are decreasing, $E_{1}$ is given by 
\begin{align*}
E_{1} & =\min(L_{1}(L_{1}=L_{2}),L_{1}(L_{1}=L_{3}))\\
 & =\min\left((\kappa+\gamma),\frac{1}{\alpha}-(1-\gamma)\right)\theta_{\alpha}.
\end{align*}

Then, denoting by $\norm{\cdot}_{\theta}$ the norm in the real interpolation space $[L^{p},\dot{W}^{1,q}]_{\theta,\infty}$ (see e.g. \cite{bergh_lofstrom} for definition and properties), we have 
\[
\norm{A_{1}^{\lambda}f}_{\theta_{\alpha}}\lesssim\lambda^{E_{1}},
\]
which implies the conclusion of Lemma~\ref{lem:A1f}.

In the case $q=1$, we obtain the same estimates, but the space $\dot{W}^{1,q}=(-\Delta_{t,x})^{-1/2}L^{q}$ has to be replaced with $(-\Delta_{t,x})^{-1/2}\mathcal{M}$. Since this space contains $\dot{W}^{s,1}$ for all $s<1$ we still obtain the conclusion. \end{proof}
\begin{lem}
\label{lem:A2f} For all $s\in[0,\theta_{\beta})$ there exists $C>0$ such that for any $\lambda\leq \Lambda$ it holds 
\[
\norm{A_{2}^{\lambda}f}_{W^{s,r_{\beta}}}\leq C\lambda^{-E_{2}},
\]
where $E_{2}$ is given by 
\[
E_{2}=\max\left(\frac{2\tau}{\beta}-\kappa-\gamma,\frac{\tau-1}{\beta}+1-\gamma,0\right)\theta_{\beta}.
\]
\end{lem}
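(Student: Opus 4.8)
The plan is to mirror the proof of Lemma~\ref{lem:A1f}, replacing the r\^ole of \eqref{H1} by the far-field estimate \eqref{H3:0} and keeping careful track of the dependence on $\lambda$. As in \eqref{eq:f_decomp_1}--\eqref{eq:f_decomp} I fix $\delta>0$ and split $f=f^{0}+f^{1}$ according to whether the kinetic symbol $i\tau'+ia(v)\cdot\xi'$ is of size $\lesssim\delta$ or not, estimate $\norm{A_{2}^{\lambda}f^{0}}_{L^{p}}$ and $\norm{A_{2}^{\lambda}f^{1}}_{\dot{W}^{1,q}}$, and conclude by real interpolation in $[L^{p},\dot{W}^{1,q}]_{\theta_{\beta},\infty}$. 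The gain over Lemma~\ref{lem:A1f} is that $A_{2}^{\lambda}$ only sees velocities with $\abs{v}\geq\lambda$, so that the relevant sublevel sets may be measured by \eqref{H3:0} with the better exponent $\beta>\alpha$; the price is the blow-up factor $\lambda^{-\tau}$, whose propagation through the estimates ultimately produces $\lambda^{-E_{2}}$.

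For the low-frequency part, Lemma~\ref{lem:Lpestim} together with \eqref{H3:0} gives
\[
\norm{A_{2}^{\lambda}f^{0}}_{L^{p}}\lesssim\sup_{\tau^{2}+\abs{\xi}^{2}=1}\abs{\left\lbrace v\in I\colon\abs{v}\geq\lambda,\ \abs{\tau+a(v)\cdot\xi}\leq2\delta\right\rbrace}^{1/\bar{p}}\lesssim\left(\lambda^{-\tau}\delta^{\beta}\right)^{1/\bar{p}}.
\]
For the high-frequency part I use \eqref{eq:kin} in Fourier variables and integrate by parts in $v$ exactly as in Lemma~\ref{lem:A1f}, producing the same four terms, now carrying $\eta_{2}(v/\lambda)$ and $\eta_{2}'(v/\lambda)$ in place of $\eta_{1}(v/\lambda),\eta_{1}'(v/\lambda)$, and with $\mathcal{F}g$ rewritten as $\abs{v}^{\gamma}\mathcal{F}(\abs{v}^{-\gamma}g)$ so that \eqref{H4:0} applies. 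The essential new difficulty is that $\eta_{2}(v/\lambda)$ is supported on $\abs{v}\geq\lambda$, a region spanning many dyadic scales, so the crude bound $\sup_{\abs{v}\geq\lambda}\abs{a'(v)}\lesssim1$ would waste the gain encoded in \eqref{H2:0}. I would therefore decompose the far region into dyadic annuli $\left\lbrace 2^{k}\lambda\leq\abs{v}\leq2^{k+1}\lambda\right\rbrace$, $k\geq0$, and on each annulus use \eqref{H2:0} to bound $\abs{a'(v)}\lesssim(2^{k}\lambda)^{\kappa}$ and \eqref{H3:0} (at scale $2^{k}\lambda$) to bound the measure of the sublevel set by $(2^{k}\lambda)^{-\tau}(2^{j}\delta)^{\beta}$, then apply Lemma~\ref{lem:Lpestim} with $p=q,\ \sigma=0,\ r=q'$ and sum over $k$ and over the Littlewood--Paley index $j\geq1$. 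The summation in $k$ is governed by the sign of $\kappa+\gamma-\tau/q'$: the term carrying $a'(v)$ contributes $\delta^{-2+\beta/q'}\lambda^{\min(0,\,\kappa+\gamma-\tau/q')}$, while the boundary term carrying $\eta_{2}'$ sits at the single scale $\abs{v}\sim\lambda$ and contributes $\delta^{-1+\beta/q'}\lambda^{-1+\gamma-\tau/q'}$; the two remaining terms (from $\phi'$ and $h$) are dominated by these since $\gamma\leq1$ and $\lambda\leq\Lambda\lesssim1$.

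With these bounds I would form $K(t,A_{2}^{\lambda}f)\lesssim\norm{A_{2}^{\lambda}f^{0}}_{L^{p}}+t\norm{A_{2}^{\lambda}f^{1}}_{\dot{W}^{1,q}}$, choose $\delta=t^{a}\lambda^{b}$ with $a$ fixed by balancing the powers of $t$ in the leading terms (which forces $a=\frac{\bar{p}}{\beta}\theta_{\beta}$), cut off at $t=\mu$ using the crude uniform bound $\norm{A_{2}^{\lambda}f}_{L^{p}}\lesssim1$ (obtained by sending $\delta\to\infty$ and using $\abs{\left\lbrace v\in I\colon\abs{v}\geq\lambda\right\rbrace}\lesssim1$) for large $t$, choose $\mu$ to balance the two regimes, and finally optimize the free exponent $b$. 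As in Lemma~\ref{lem:A1f} this reduces to a $\sup_{b}$ of a minimum of affine functions of $b$; here the competition is between the $a'$-term (branch $\frac{2\tau}{\beta}-\kappa-\gamma$), the $\eta_{2}'$-boundary term (branch $\frac{\tau-1}{\beta}+1-\gamma$), and the uniform bound (branch $0$), and the outcome is precisely $\norm{A_{2}^{\lambda}f}_{\theta_{\beta}}\lesssim\lambda^{-E_{2}}$ with $E_{2}$ as stated, from which the conclusion follows by the interpolation embedding $[L^{p},\dot{W}^{1,q}]_{\theta_{\beta},\infty}\hookrightarrow W^{s,r_{\beta}}$ for $s<\theta_{\beta}$. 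The case $q=1$ is handled as in Lemma~\ref{lem:A1f}, replacing $\dot{W}^{1,q}$ by $(-\Delta_{t,x})^{-1/2}\mathcal{M}$, which contains $\dot{W}^{s,1}$ for every $s<1$.

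The main obstacle, and the step requiring the most care, is the bookkeeping in the dyadic annular sum: one must combine the \emph{growth} of $\abs{a'}$ dictated by \eqref{H2:0} with the \emph{decay} of the sublevel-set measure dictated by \eqref{H3:0} at the same scale $2^{k}\lambda$, and check that these genuinely compete so that a single-scale bound would be lossy. Getting the threefold maximum in $E_{2}$ right then hinges on carrying the resulting $\min(0,\kappa+\gamma-\tau/q')$ correctly through the double optimization in $\delta$ and $b$, and on verifying that every discarded term is indeed dominated in the regime $\lambda\leq\Lambda\lesssim1$, $\gamma\leq1$.
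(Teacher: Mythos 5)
Your decomposition into dyadic annuli is the right instinct, but you place it at the wrong level of the argument, and this costs you the claimed exponent. In the paper, the annular decomposition $\eta_2(v/\lambda)=\sum_{k\ge0}\widetilde\eta\left(v/(2^k\lambda)\right)$ comes \emph{first}: for each piece $A_2^{(k)}f$, with $\mu=2^k\lambda$, the entire $K$-functional argument is run separately --- the low-frequency bound is $\min(\mu^{-\tau}\delta^\beta,\mu)^{1/\bar p}$, the crude bound is $\norm{A_2^{(k)}f}_{L^p}\lesssim\mu^{1/\bar p}$, and, crucially, $\delta=t^{\bar p\theta_\beta/\beta}\mu^{b}$ and the cutoff $\nu$ are chosen \emph{per scale} --- yielding $\norm{A_2^{(k)}f}_{\theta_\beta}\lesssim\mu^{-E}$, and only afterwards are the interpolation norms summed over $k$. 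You instead sum the annuli inside the $\dot W^{1,q}$ estimate and then run a single interpolation with one $\delta=t^a\lambda^b$ for all scales. This mixes worst cases from opposite ends: your $f^0$ bound carries the innermost-annulus factor $\lambda^{-\tau/\bar p}$, while your summed $a'$-term, $\delta^{-2+\beta/q'}\lambda^{\min(0,\kappa+\gamma-\tau/q')}$, is dominated by the \emph{outermost} annulus; in the regime relevant for the applications, $\kappa+\gamma>\tau/q'$ (e.g.\ $q=1$, so $q'=\infty$), this factor is $\lambda^{0}$ and all gain from \eqref{H2:0} is lost. In addition, your crude bound $\norm{A_2^\lambda f}_{L^p}\lesssim1$ forfeits the per-annulus factor $\mu^{1/\bar p}$, which is exactly where the $-1$ inside the paper's third branch $\frac{1}{\beta+\bar p}\left(-1+\beta\left(1-\gamma+\frac{\tau}{q'}\right)+\beta\left(1-\frac{\beta}{q'}\right)b\right)$ comes from; your corresponding branch is larger by $\frac{1}{\beta+\bar p}$.

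These losses are not bookkeeping noise: carrying out your optimization, your three branches are $L_1(b)=\frac{\tau}{\bar p}-\frac{\beta}{\bar p}b$, $\tilde L_2(b)=\max\left(0,\frac{\tau}{q'}-\kappa-\gamma\right)+\left(2-\frac{\beta}{q'}\right)b$ and $\tilde L_3(b)=\frac{\beta}{\beta+\bar p}\left(1-\gamma+\frac{\tau}{q'}+\left(1-\frac{\beta}{q'}\right)b\right)$, and $\inf_b\max$ of these is in general strictly larger than $E_2$. In the paper's model case ($a(v)=v^\ell$, $\beta=1$, $\kappa=\tau=\ell-1$, with $q=1$, $\gamma\approx1$ as used in the proof of Theorem~\ref{thm:scl}), the crossing of $L_1$ with $\tilde L_2(b)=2b$ gives the value $2(\ell-1)\theta_\beta$, so your argument yields at best $\norm{A_2^\lambda f}_{\theta_\beta}\lesssim\lambda^{-2(\ell-1)\theta_\beta}$, whereas the lemma asserts $\lambda^{-\max(\ell-2,0)\theta_\beta}$ --- strictly weaker for every $\ell>1$, and dramatically so for $\ell=2$, where the claimed bound is uniform in $\lambda$ but yours blows up. So your final assertion that ``the outcome is precisely $E_2$ as stated'' does not follow from your estimates. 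To prove the lemma you must keep the annuli separate through the entire interpolation step, optimizing $\delta$ scale by scale, and sum only at the very end, using geometric convergence of $\sum_k 2^{-kE}$ when $E>0$ and the finiteness of the number of scales with $2^k\lambda\lesssim1$ when $E\le0$.
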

\begin{proof}
As in the proof of Lemma~\ref{lem:A1f} we consider the decomposition \eqref{eq:f_decomp_1} and treat first the case $q>1$.

Let $\widetilde{\eta}(v)=\eta_{1}(v/2)-\eta_{1}(v)$, so that $\widetilde{\eta}$ is supported inside $\lbrace1\leq\abs{v}\leq4\rbrace$ and $\eta_{2}(v)=\sum_{k\geq0}\widetilde{\eta}(v/2^{k})$. Hence, it holds 
\begin{align*}
A_{2}^{\lambda}f & =\sum_{k\geq0}A_{2}^{(k)}f,\\
A_{2}^{(k)}f & =\int f(t,x,v)\phi(v)\widetilde{\eta}\left(\frac{v}{2^{k}\lambda}\right)\,dv.
\end{align*}
Next we estimate $\norm{A_{2}^{(k)}f}_{\theta_{\beta}}$. Fix $k\geq0$ and let $\mu:=2^{k}\lambda$, so that 
\begin{align*}
A_{2}^{(k)}f
&=\int f(t,x,v)\phi(v)\widetilde{\eta}(v/\mu)\,dv\\
&=\sum_{j\ge 1} \int f^{(j)}(t,x,v)\phi(v)\widetilde{\eta}(v/\mu)\,dv=:\sum_{j\ge 1} A_{2}^{(k)}f^{(j)},
\end{align*}
with $f^{(j)}$ defined as in \eqref{eq:f_decomp}. Analogously,
\begin{align*}
A_{2}^{(k)}f^0
&=\int f^0(t,x,v)\phi(v)\widetilde{\eta}(v/\mu)\,dv.
\end{align*}

Note that $A_{2}^{(k)}f$ is nonzero only for $k$ such that $\mu=2^{k}\lambda\lesssim1$, since $\phi$ is supported in a compact interval and $\widetilde{\eta}(v)$ vanishes for $\abs{v}\leq1$. By Lemma~\ref{lem:Lpestim} and assumption \eqref{H3:0} it holds 
\begin{align}
\norm{A_{2}^{(k)}f^{0}}_{L^{p}} & \lesssim\sup_{\tau^{2}+\xi^{2}=1}\abs{\left\lbrace v\in I\colon4\mu\geq\abs{v}\geq\mu,\,\abs{\tau+a(v)\xi}\leq2\delta\right\rbrace }^{1/\bar{p}}\nonumber \\
 & \lesssim\min(\mu^{-\tau}\delta^{\beta},\mu)^{\frac{1}{\bar{p}}}.\label{estim:A2f0}
\end{align}
As in the proof of Lemma~\ref{lem:A1f} we have 
\begin{align*}
 & \mathcal{F}(-\Delta_{t,x})^{1/2}A_{2}^{(k)}f^{(j)}\\
 & =-\frac{1}{(2^{j}\delta)^{2}}\int\widetilde{\psi}_{1}'\left(\frac{i\tau'+i a(v)\cdot\xi'}{2^{j}\delta}\right)ia'(v)|v|^{\gamma}\cdot\xi'\mathcal{F}|v|^{-\gamma}g\,\phi(v)\widetilde{\eta}\left(\frac{v}{\mu}\right)\,dv\\
 & \quad-\frac{1}{2^{j}\delta\mu}\int\widetilde{\psi}_{1}\left(\frac{i\tau'+i a(v)\cdot\xi'}{2^{j}\delta}\right)|v|^{\gamma}\mathcal{F}|v|^{-\gamma}g\,\phi(v)\widetilde{\eta}'\left(\frac{v}{\mu}\right)\,dv\\
 & \quad-\frac{1}{2^{j}\delta}\int\widetilde{\psi}_{1}\left(\frac{i\tau'+i a(v)\cdot\xi'}{2^{j}\delta}\right)|v|^{\gamma}\mathcal{F}|v|^{-\gamma}g\,\phi'(v)\widetilde{\eta}\left(\frac{v}{\mu}\right)\,dv\\
 & \quad+\frac{1}{2^{j}\delta}\int\widetilde{\psi}_{1}\left(\frac{i\tau'+i a(v)\cdot\xi'}{2^{j}\delta}\right)\mathcal{F}h\,\phi(v)\widetilde{\eta}\left(\frac{v}{\mu}\right)\,dv
\end{align*}
which yields, using assumptions \eqref{H2:0}-\eqref{H4:0}, 
\begin{equation}
\norm{A_{2}^{(k)}f^{1}}_{\dot{W}^{1,q}}\lesssim\delta^{-2+\frac{\beta}{q'}}\mu^{-\frac{\tau}{q'}+\kappa+\gamma}+\delta^{-1+\frac{\beta}{q'}}\mu^{-1-\frac{\tau}{q'}+\gamma}.\label{estim:A2f1}
\end{equation}
Here as in the proof of Lemma~\ref{lem:A1f} we estimated the third and fourth term by the second term on the right hand side since they come with higher powers of $\mu\lesssim1$. The estimates \eqref{estim:A2f0}-\eqref{estim:A2f1} then imply
\begin{align*}
K(t,A_{2}^{(k)}f) & :=\inf_{A_{2}^{(k)}f=\tilde{f}^{0}+\tilde{f}^{1}}\left(\norm{\tilde{f}^{0}}_{L^{p}}+t\norm{\tilde{f}^{1}}_{\dot{W}^{1,q}}\right)\\
 & \lesssim \mu^{-\frac{\tau}{\bar p}}
 \delta^{\frac{\beta}{\bar{p}}}+t\delta^{\frac{\beta}{q'}-2}\mu^{-\frac{\tau}{q'}+\kappa+\gamma}+t\delta^{\frac{\beta}{q'}-1}\mu^{-1-\frac{\tau}{q'}+\gamma}.
\end{align*}
Equilibrating the first and the second term yields the choice $\delta=t^{\frac{\bar p}{\beta}\theta_{\beta}}\mu^{b}$. Since $\|A_{2}^{(k)}f\|_{L^p} \le \mu^\frac{1}{\bar p}$ we also have $K(t,A_{2}^{(k)}f) \lesssim 1$ for all $t\ge 0$. We thus obtain
\begin{align*}
 & t^{-\theta_{\beta}}K(t,A_{2}^{(k)}f)\\
 & \lesssim\begin{cases}
\mu^{-\frac{\tau}{\bar{p}}+b\frac{\beta}{\bar{p}}}+\mu^{-\frac{\tau}{q'}+\kappa+\gamma-b\left(2-\frac{\beta}{q'}\right)}+\nu^{\frac{\bar{p}}{\beta}\theta_{\beta}}\mu^{-1-\frac{\tau}{q'}+\gamma-b\left(1-\frac{\beta}{q'}\right)} & \text{ for }t\leq\nu,\\
\nu^{-\theta_{\beta}}\mu^{\frac{1}{\bar{p}}} & \text{ for }t\geq\nu.
\end{cases}
\end{align*}
We choose $\nu=\mu^{\frac{1}{\theta_{\beta}}\frac{\beta}{\beta+\bar{p}}\left(1-\gamma+\frac{\tau}{q'}+b\left(1-\frac{\beta}{q'}\right)+\frac{1}{\bar{p}}\right)}$ to deduce 
\begin{align*}
t^{-\theta_{\beta}}K(t,A_{2}^{(k)}f) & \lesssim\mu^{-\frac{\tau}{\bar{p}}+b\frac{\beta}{\bar{p}}}+\mu^{-\frac{\tau}{q'}+\kappa+\gamma-b\left(2-\frac{\beta}{q'}\right)}\\
 & +\mu^{-\frac{1}{\beta+\bar{p}}\left(-1+\beta\left(1-\gamma+\frac{\tau}{q'}\right)+\beta\left(1-\frac{\beta}{q'}\right)b\right)}.
\end{align*}
Then optimizing in $b$ we set 
\[
E=\inf_{b\in\R}\max\left\lbrace \begin{gathered}\frac{\tau}{\bar{p}}-\frac{\beta}{\bar{p}}b\\
\frac{\tau}{q'}-\kappa-\gamma+\left(2-\frac{\beta}{q'}\right)b\\
\frac{1}{\beta+\bar{p}}\left(-1+\beta\left(1-\gamma+\frac{\tau}{q'}\right)+\beta\left(1-\frac{\beta}{q'}\right)b\right)
\end{gathered}
\right\rbrace ,
\]
and obtain (recall $\mu\lesssim1$) that 
\begin{equation}
\norm{A_{2}^{(k)}f}_{\theta_{\beta}}\lesssim\mu^{-E}=2^{-kE}\lambda^{-E}.\label{eq:A2kf}
\end{equation}
We denote by $L_{1}(b),L_{2}(b),L_{3}(b)$ the three affine functions of $b$ appearing in the definition of $E$. Since $L_{1}$ is increasing while $L_{2},L_{3}$ are increasing, it holds 
\begin{align*}
E & =\max(L_{1}(L_{1}=L_{2}),L_{1}(L_{1}=L_{3}))\\
 & =\max\left(\frac{2\tau}{\beta}-\kappa-\gamma,\frac{\tau-1}{\beta}+1-\gamma\right)\theta_{\beta}.
\end{align*}

If $E>0$, then summing \eqref{eq:A2kf} over $k\geq0$ yields $\norm{A_{2}^{\lambda}f}_{\theta_{\beta}}\lesssim\lambda^{-E}$. If $E\leq0$, then summing \eqref{eq:A2kf} over those $k$ satisfying $2^{k}\lambda\lesssim1$ yields 
\[
\norm{A_{\beta}^\lambda f}_{\theta_{2}}\lesssim\lambda^{-E}\sum_{0\leq k\leq\log(C/\lambda)}(2^{-E})^{k}\lesssim\lambda^{-E}2^{E\log(\lambda/C)}\lesssim1.
\]
Hence we conclude that $\norm{A_{2}^{\lambda}f}_{\theta_{\beta}}\lesssim\lambda^{-\max(E,0)}$.

To treat the case $q=1$ we argue as in the proof of Lemma~\ref{lem:A1f}. 
\end{proof}

\subsection*{Proofs of Theorem~\ref{thm:averaging} and Theorem~\ref{thm:scl}}

\label{s:proofs}
\begin{proof}
[Proof of Theorem~\ref{thm:averaging}] By \eqref{eq:f_l_split}, Lemma \ref{lem:A1f} and Lemma \ref{lem:A2f}, for $\lambda \lesssim 1$ and $t\ge0$,
\begin{align*}
K(t,\bar f) 
& :=\inf_{\bar  f=\tilde{f}^{0}+\tilde{f}^{1}}\left(\norm{\tilde{ f}^{0}}_{\theta_\alpha}+t\norm{\tilde{f}^{1}}_{\theta_\beta}\right)\\
&\le \norm{A_{1}^{\lambda}f}_{\theta_{\alpha}}+t\norm{A_{2}^{\lambda}f}_{\theta_{\beta}}\\
&\lesssim \lambda^{E_1}+t\lambda^{-E_2}.
\end{align*}
Choosing, $\lambda=t^{\frac{1}{E_{1}+E_{2}}}$ yields
\begin{align*}
K(t,\bar f) \lesssim t^{\frac{E_{1}}{E_{1}+E_{2}}}=t^{\eta}\qquad\forall t\lesssim 1.
\end{align*}
Since $\norm{\bar f}_{\theta_{\alpha}}\lesssim1$ (as can be seen e.g. by choosing $\lambda=\Lambda$ in Lemma~\ref{lem:A1f}) we have
\begin{align*}
  K(t,\bar f) \lesssim \norm{\bar f}_{\theta_{\alpha}} \lesssim 1\quad\forall t\ge 0.
\end{align*}
Hence, $\bar{f}$ belongs to the real interpolation space 
\[
\left[[L^{p},\dot{W}^{1,q}]_{\theta_{1},\infty},[L^{p},\dot{W}^{1,q}]_{\theta_{2},\infty}\right]_{\eta,\infty}=[L^{p},\dot{W}^{1,q}]_{\theta,\infty},
\]
where $\theta=(1-\eta)\theta_{\alpha}+\eta\theta_{\beta}$ and the equality follows from the reiteration Theorem of real interpolation. We further note that this space contains $W^{s,r}$ for all $s<s_{*}=\theta$. This argument works for $q>1$ and for $q=1$ we may adapt it as in the proof of Lemma~\ref{lem:A1f}. 
\end{proof}

\begin{proof}
[Proof of Theorem~\ref{thm:scl}] We apply the kinetic formulation for \eqref{eq:intro_SCL} (cf.~Appendix \ref{sec:kinetic_solutions}), that is, 
\[
f=\one_{0<v<u(t,x)}-\one_{0>v>u(t,x)},
\]
satisfies, in the sense of distributions, 
\begin{equation}
\partial_{t}f+a(v)\cdot\nabla_{x}f=\partial_{v}m+\delta_{v=u}S \quad\text{on } [0,T]\times\R^n_x\times\R_v\label{eq:kin_form}
\end{equation}
for some Radon measure $m\geq0$. We further note that 
$$f\in L^{1}([0,T]\times\R^n_x\times\R_v)\cap L^{\infty}([0,T]\times\R^n_x\times\R_v)$$
and $f\in L^{\infty}([0,T]\times\R^n_x;BV(\R_v)).$ Hence, by interpolation, 
  $$f\in L^{2}_{loc}([0,T]\times\R^n_x;W^{\sigma,2}(\R_v))$$ 
for all $\sigma\in[0,\frac{1}{2})$. 

For a bounded interval $I\subseteq \R_v$ let $Z\cap I =\{z_1,\dots,z_N\}$. By Proposition \ref{prop:mass_bound} below, $\abs{v-z_i}^{\alpha-1}m$ has locally finite mass for every $\alpha\in(0,1)$ and $i\in 1,\dots,N$. It follows that $\dist(v,Z)^{-\gamma}m$ has locally finite mass. 

Let $\eta \in C^\infty_c(0,T)$. Then $\tilde f := f \eta$ satisfies \eqref{eq:kinmultid} with $g=m\eta$, $h=\delta_{v=u}S\eta+\dot{\eta}f$ and  $\tilde f\in L^{2}_{loc}(\R_t\times\R^n_x;W^{\sigma,2}(\R_v))$ for all $\sigma\in[0,\frac{1}{2})$.

We now apply Theorem \ref{thm:averaging} with $p=2$, $\sigma\approx\frac{1}{2}$, $\gamma\approx1$, $q=1$, $\bar{p}\approx1$, to obtain, for all $\phi\in C_{c}^{\infty}(\R)$, 
\[
\int \tilde f\phi\,dv\in W_{loc}^{s,r}
\]
for all $s<s_{*}$, which implies the claim.
\end{proof}
\appendix

\section{Kinetic solutions for scalar conservation laws with a force\label{sec:kinetic_solutions}}

In this section we present some brief comments on the extension of the concept of kinetic solutions and their well-posedness for scalar conservation laws with an $L^{1}$-force \eqref{eq:intro_SCL}. This proceeds along the lines of \cite{CP03,P02}. We will refer to kinetic solutions also as entropy solutions. 
\begin{defn}
A kinetic/entropy solution to \eqref{eq:intro_SCL} is a function $u\in C([0,T];L^{1}(\R^{n}))$ such that the corresponding kinetic function 
\begin{equation*}
f(t,x,v)=\chi(v,u(t,x))=\one_{0<v<u(t,x)}-\one_{0>v>u(t,x)}
\end{equation*}
satisfies, in the sense of distributions, 
\begin{align}
\partial_{t}f+a(v)\cdot\nabla_{x}f & =\partial_{v}m+\delta_{u=v}S\quad\text{on }(0,T)\times\R^{n}\label{eq:app_loc_kin}\\
f_{|t=0} & =\chi(v,u_{0})\quad\text{on }\R^{n},\nonumber 
\end{align}
where $a:=A'$, $m$ is a non-negative Radon measure and 
\[
\int m(t,x,v)\,dtdx\le\mu(v)\in L_{0}^{\infty}(\R),
\]
where $L_{0}^{\infty}(\R)$ denotes the space of all essentially bounded functions decaying to zero for $|v|\to\infty$.
\end{defn}
\begin{rem}
The renormalized entropy solutions introduced in \cite{benilan_carillo_wittbold_00} provide another possible extension of entropy solutions to this $L^1$ setting, and it is very likely that they coincide with kinetic solutions.
\end{rem}
\begin{thm}
Let $u_{0}\in L^{1}(\R^{n})$, $S\in L^{1}([0,T]\times\R^{n}).$ Then there is a unique kinetic solution $u$ to \eqref{eq:intro_SCL}. For two kinetic solutions $u^{1}$, $u^{2}$ with initial conditions $u_{0}^{1},u_{0}^{2}$ and forces $S^1, S^2$ respectively we have 
\begin{equation}\label{eq:contraction}
\begin{aligned}
\sup_{t\in[0,T]}\|(u^{1}(t)-u^{2}(t))_+\|_{L^{1}(\R^{n})}&\le\|(u_{0}^{1}-u_{0}^{2})_+\|_{L^{1}(\R^{n})}\\
&\quad +\|S^1-S^2\|_{L^{1}([0,T]\times\R^{n})}.
\end{aligned}
\end{equation}
\end{thm}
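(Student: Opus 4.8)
The plan is to prove the one-sided estimate \eqref{eq:contraction} first; uniqueness then follows at once by taking $u_{0}^{1}=u_{0}^{2}$ and $S^{1}=S^{2}$, and existence will be obtained afterwards by approximation. The whole argument takes place at the kinetic level, and the starting point is the pair of elementary identities
\[
\int_{\R}\abs{\chi(v,a)-\chi(v,b)}\,dv=\abs{a-b},\qquad\int_{\R}\left(\chi(v,a)-\chi(v,b)\right)_{+}\,dv=(a-b)_{+},
\]
valid for all $a,b\in\R$ (verified by distinguishing the cases $\sgn a=\sgn b$ and $\sgn a\ne\sgn b$). Writing $f^{i}=\chi(v,u^{i})$, these reduce \eqref{eq:contraction} to the kinetic contraction $\sup_{t}\iint(f^{1}-f^{2})_{+}\,dv\,dx\le\iint(f^{1}_{0}-f^{2}_{0})_{+}\,dv\,dx+\norm{S^{1}-S^{2}}_{L^{1}}$.

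The core of the proof is to establish this kinetic contraction, following Chen--Perthame \cite{CP03} and Perthame \cite{P02}. Formally, subtracting the two kinetic equations \eqref{eq:app_loc_kin}, multiplying by $\sgn_{+}(f^{1}-f^{2})=\one_{f^{1}>f^{2}}$ and integrating in $(x,v)$, the transport term integrates to zero, so that
\[
\frac{d}{dt}\int_{\R^{n}}\!\int_{\R}(f^{1}-f^{2})_{+}\,dv\,dx=\underbrace{\int_{\R^{n}}\!\int_{\R}\sgn_{+}(f^{1}-f^{2})\,\partial_{v}(m^{1}-m^{2})\,dv\,dx}_{=:D}+R,
\]
where the remainder $R$ collects the source contributions. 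Integrating these in $v$ using $\int\delta_{u^{i}=v}\,dv=1$, they reduce to the Kato term $\int_{\R^{n}}\sgn_{+}(u^{1}-u^{2})(S^{1}-S^{2})\,dx$, which is bounded by $\norm{S^{1}-S^{2}}_{L^{1}(\R^{n})}$. The decisive point is the sign of the dissipation term $D$: integrating by parts in $v$ gives $D=-\iint\partial_{v}[\sgn_{+}(f^{1}-f^{2})]\,(m^{1}-m^{2})$, and since $\partial_{v}\sgn_{+}(f^{1}-f^{2})$ concentrates on $\{v=u^{1}\}$ and $\{v=u^{2}\}$ with signs dictated by the jumps of $\chi$, the non-negativity $m^{1},m^{2}\ge0$ forces $D\le0$.

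Making $D\le0$ rigorous is the main obstacle: $\sgn_{+}(f^{1}-f^{2})$ is discontinuous and the $m^{i}$ are only measures, so one cannot integrate by parts directly. I would therefore mollify the kinetic equations in $(t,x,v)$, replace $\sgn_{+}$ by a smooth nondecreasing approximation $\sgn_{+}^{\e}$, run the computation on the regularized quantities while controlling the arising commutators, and then pass $\e\to0$ and remove the regularization as in \cite{CP03,P02}. This is precisely where the sign of the entropy-dissipation measures is exploited, and where the merely $L^{1}$ regularity of $S$ must be tracked so as not to lose control of $R$.

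Finally, for existence I would argue by approximation, using the contraction just proved. For bounded data $u_{0}\in L^{1}\cap L^{\infty}$ and bounded force $S$, existence of a kinetic solution with a non-negative measure $m$ follows from the vanishing-viscosity method of the classical theory \cite{lions_perthame_tadmor_94,P02}, with $m$ arising as the weak-$*$ limit of $\e\abs{\nabla u^{\e}}^{2}\delta_{v=u^{\e}}$ and satisfying the uniform mass bound $\int m\,dt\,dx\le\mu(v)\in L_{0}^{\infty}(\R)$ coming from the entropy balance. For general $u_{0}\in L^{1}$ and $S\in L^{1}$, I would truncate to bounded approximations $u_{0}^{k}\to u_{0}$ and $S^{k}\to S$ in $L^{1}$, solve each approximate problem, and invoke \eqref{eq:contraction} in its two-sided $L^{1}$ form (obtained by symmetry) to see that $(u^{k})$ is Cauchy in $C([0,T];L^{1}(\R^{n}))$. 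The limit $u$ is the desired kinetic solution, once one checks that the defect measures $m^{k}$, bounded uniformly in mass by $\mu^{k}(v)$, converge weakly-$*$ to a non-negative $m$ with $\int m\,dt\,dx\le\mu(v)\in L_{0}^{\infty}$; here the uniform-in-$k$ decay of $\mu^{k}$ for large $\abs{v}$, inherited from the $L^{1}$ bounds on the data and the force, is what is needed.
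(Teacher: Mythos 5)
Your reduction identities and your existence-by-approximation argument are fine and essentially coincide with the paper's (which also approximates by $W^{1,\infty}$ data and uses the contraction to produce a Cauchy sequence in $C([0,T];L^{1})$). The genuine gap is at the heart of the contraction proof: the claimed sign of the dissipation term $D$ is wrong. Writing $g^{i}=\one_{v<u^{i}}$, one has a.e.\ $\sgn_{+}(f^{1}-f^{2})=\one_{u^{2}<v<u^{1}}\one_{u^{1}>u^{2}}$, hence
\begin{equation*}
\partial_{v}\bigl[\sgn_{+}(f^{1}-f^{2})\bigr]=\bigl(\delta_{v=u^{2}}-\delta_{v=u^{1}}\bigr)\one_{u^{1}>u^{2}},
\end{equation*}
so your integration by parts gives, schematically,
\begin{equation*}
D=\int_{\{u^{1}>u^{2}\}}\Bigl(-m^{1}\big|_{v=u^{2}}+m^{2}\big|_{v=u^{2}}+m^{1}\big|_{v=u^{1}}-m^{2}\big|_{v=u^{1}}\Bigr).
\end{equation*}
The cross terms $-m^{1}|_{v=u^{2}}$ and $-m^{2}|_{v=u^{1}}$ are favorable, but the self terms $+m^{1}|_{v=u^{1}}$ and $+m^{2}|_{v=u^{2}}$ are nonnegative, so $m^{1},m^{2}\ge0$ does \emph{not} force $D\le0$. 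These self terms are not commutator errors that smoothing $\sgn_{+}$ and mollifying in $(t,x,v)$ will remove: they are order one, and they are precisely the ill-defined products of a measure with a delta concentrated where that same measure charges (for a single shock, $m^{1}$ is supported exactly on the jump set of $u^{1}$, where $\delta_{v=u^{1}}$ has no meaning). No choice of regularization makes them vanish or acquire a sign, so the program "regularize, control commutators, pass to the limit" breaks down at this step.

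The rigorous proofs --- the paper's, and those of Chen--Perthame and Perthame that you invoke --- are structured so that the self terms never appear; this is not a regularization of your computation but a different one. The paper differentiates the \emph{product} $g^{1}_{\e_{1}}(1-g^{2}_{\e_{2}})$ (equal a.e., after removing the mollification, to your multiplier $\sgn_{+}(f^{1}-f^{2})$), so that by the Leibniz rule the measure $m^{1}$ is only ever tested against $w\mapsto 1-g^{2}_{\e_{2}}(t,x,w)$, whose monotonicity (its $w$-derivative is a mollification of $+\delta_{w=u^{2}}$) yields a signed cross term; symmetrically $m^{2}$ only meets $\partial_{v}g^{1}_{\e_{1}}$. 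This doubling structure --- two separate mollification parameters $\e_{1},\e_{2}$ plus the velocity mollifier $\rho_{\delta}(v-w)$ --- cannot be reproduced by testing the equation for the difference $f^{1}-f^{2}$ against any approximation of $\sgn_{+}(f^{1}-f^{2})$. A second, smaller omission: integrating by parts in $v$ over all of $\R$ requires controlling the tails of $m^{1},m^{2}$, whose total mass in $v$ need not be finite; the paper does this with cut-offs $\chi_{n}(v)$ and the decay condition $\int m\,dt\,dx\le\mu(v)\in L_{0}^{\infty}(\R)$ built into the definition of kinetic solutions, a point your outline skips when it lets the transport term "integrate to zero" and discards boundary terms in $v$.
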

\begin{proof}
\textit{Contraction}: We first note that the function $g(t,x,v)=\one_{v<u(t,x)}$ satisfies
 the same kinetic equation as $f$, since
\begin{align*}
f(t,x,v)-g(t,x,v)&=-\one_{v>0}-\one_{v=0}\one_{u(t,x)\geq 0}-\one_{v<0}\one_{u(t,x)=v},\\
(\partial_t + a(v)\cdot\nabla_x)\one_{v<0} &=0\qquad\text{in }\mathcal D'_{t,x,v},\\
\text{and }\one_{v=0}&=
\one_{u(t,x)=v}=0\qquad\text{for a.e. }(t,x,v).
\end{align*}
The proof of the contraction inequality \eqref{eq:contraction} relies on the identity
\begin{equation*}
\int_v g^1 (1-g^2)  =(u^1-u^2)_+.
\end{equation*}
We introduce nonnegative mollifiers $\Phi_\e(t,x)$
and let the subscript $\e$ denote the convolution in $(t,x)$ with $\Phi_\e$. In particular, we have
\begin{equation*}
(\partial_t + a(v)\cdot\nabla_x)g_\e=\partial_v m_\e +\left(\delta_{v=u(t,x)}S(t,x)\right)_\e,
\end{equation*}
where $\left(\delta_{v=u(t,x)}S(t,x)\right)_\e$ is the distribution given by
\begin{align*}
&\left\langle \left(\delta_{v=u(t,x)}S(t,x)\right)_\e,\theta(t,x,v)\right\rangle\\& =\int_{t,x}  S(t,x)\int_{s,y}  \theta(s,y,u(t,x))\Phi_\e(s-t,y-x).
\end{align*}
We also introduce a nonnegative  cut-off function $\chi(v)$. By dominated differentiation, for any $\e_1,\e_2>0$ we have
\begin{align*}
T&:=\partial_t\int_v g^1_{\e_1}(1-g^2_{\e_2})\chi(v) +\nabla_x\cdot\int_{v} g^1_{\e_1}(1-g^2_{\e_2})\chi(v) a(v) \\
& =\int_v \chi(v) (1-g^2_{\e_2})(\partial_t + a(v)\cdot\nabla_x)g^1_{\e_1}  \\&\quad + \int_v \chi(v) g^1_{\e_1} (\partial_t + a(v)\cdot\nabla_x)(1-g^2_{\e_2})\\
& =\lim_{\delta\to 0}\left( T^1_\delta + T^2_\delta\right),
\end{align*}
where
\begin{align*}
T^1_\delta(t,x) &=\int_{v,w} \chi(v) (1-g^2_{\e_2}(t,x,w))(\partial_t + a(v)\cdot\nabla_x)g^1_{\e_1}(t,x,v)\rho_\delta(v-w),  \\
T^2_\delta(t,x) &= \int_{v,w} \chi(v) g^1_{\e_1}(t,x,w) (\partial_t + a(v)\cdot\nabla_x)(1-g^2_{\e_2}(t,x,v))\rho_\delta(v-w),
\end{align*}
and $\rho_\delta(v)$ is an even nonnegative mollifier.
Using the equation satisfied by $g^1$ we have, for any nonnegative test function $\theta(t,x)$,
\begin{align*}
&\left\langle T_\delta^1,\theta\right\rangle \\
&=- \int_v m_{\e_1}^1(dt,dx,dv)\theta(t,x) \chi'(v)\int_w  (1-g_{\e_2}^2(t,x,w))\rho_\delta(v-w) \\
&\quad - \int_v m_{\e_1}^1(dt,dx,dv)\theta(t,x) \chi(v) \int_w  (1-g_{\e_2}^2(t,x,w))(\rho_\delta)'(v-w)\\
&\quad +
\int_{t,x}  \theta(t,x)\int_{s,y}  S^1(s,y) \Phi_{\e_1}(t-s,x-y)\chi(u^1(s,y))\\
&\qquad\cdot\int_w  (1-g_{\e_2}^2(t,x,w))\rho_\delta(u^1(s,y)-w).
\end{align*}
The second term on right-hand side is nonpositive since $w \mapsto (1-g^2_{\varepsilon_2}(t,x,w))=(\one_{w\geq u^2(t,x)})_{\e_2}$ is nondecreasing.
%\begin{align*}
%&\int_w (1-g_{\e_2}^2(t,x,w))(\rho_\delta)'(v-w) \\
%&=
%-\int_{s,y}  \Phi_{\e_1}(t-s,x-y) \int dw\, 1_{w\geq u^2(s,y)}\frac{d}{dw}\left[\rho_\delta(v-w)\right]\\
%&=\int ds dy\, \Phi_{\e_1}(t-s,x-y) \rho_\delta(v-u^2(s,y))\geq 0.
%\end{align*}
Moreover, since $\rho_\delta$ is even, for any $(t,x,v)$ we have as $\delta\to 0$, 
\begin{align*}
&\int dw\, (1-g_{\e_2}^2(t,x,w))\rho_\delta(v-w) \\&=
\int ds dy\, \Phi_{\e_2}(t-s,x-y) \int dw\, \one_{w\geq u^2(s,y)}\rho_\delta(v-w) \\
&\to \int ds dy\, \Phi_{\e_2}(t-s,x-y)\sgn^+_{\frac 12}(v-u^2(s,y))\\
& = [\sgn_{\frac 12}^+(v-u^2)]_{\e_2}(t,x),
\end{align*}
where $\sgn_{\frac 12}^+(z)=\one_{(0,\infty)}(z)+\frac{1}{2}\one_{\{0\}}(z)$. Hence, we find
\begin{align*}
\limsup_{\delta\to 0}\left\langle T_\delta^1,\theta\right\rangle %\\
&\leq  \int m_{\e_1}^1(dt,dx,dv)\theta(t,x) \vert\chi'(v)\vert \\
& \quad +
\int_{t,x}  \theta(t,x)\int_{s,y} S^1(s,y) \Phi_{\e_1}(t-s,x-y)\chi(u^1(s,y))\\
&\qquad\cdot[\sgn_{\frac 12}^+(u^1(s,y)-u^2)]_{\e_2}(t,x).
\end{align*}
A similar computation shows
\begin{align*}
\limsup_{\delta\to 0}\left\langle T_\delta^2,\theta\right\rangle 
&\leq  \int m_{\e_2}^2(dt,dx,dv)\theta(t,x) \vert\chi'(v)\vert \\
& \quad -\int_{t,x}  \theta(t,x)\int_{s,y} S^2(s,y) \Phi_{\e_2}(t-s,x-y)\chi(u^2(s,y))\\
&\qquad\cdot[\sgn_{\frac 12}^+(u^1-u^2(s,y))]_{\e_1}(t,x).
\end{align*}
By Fatou's lemma these inequalities imply 
\begin{align*}
\left\langle T,\theta\right\rangle
&\leq  \big\langle\int  m^1_{\e_1}(\cdot,\cdot,dv)\vert\chi'(v)\vert,\theta\big\rangle +\big\langle\int m^2_{\e_2}(\cdot,\cdot,dv)\vert\chi'(v)\vert,\theta\big\rangle \\
&\quad +\big\langle \big(S^1\chi(u^1)\sgn_{\frac 12}^+(u^1-u^2)_{\e_2}\big)_{\e_1},\theta\big\rangle\\
&\quad- \big\langle \big(S^2 \chi(u^2)\sgn_{\frac 12}^+(u^1-u^2)_{\e_1}\big)_{\e_2},\theta\big\rangle.
\end{align*}
Next we \enquote{integrate} this inequality in $x$, that is, we apply it to a test function $\theta(t,x)=\zeta(t)K(x)\geq 0$ and let $K(x)$ approach $K\equiv 1$. Note that since
\begin{equation*}
\int \vert g^1(s,y,v)\vert \cdot \vert 1-g^2(s',y',v)\vert\, dv =(u^1(s,y)-u^2(s',y'))_+,
\end{equation*}
for any $\theta(t,x)\in L^\infty$ and $\psi(v)\in L^\infty_{loc}$ we have
\begin{align*}
&\int_{t,x,v} \left\vert g^1_{\e_1}(1-g^2_{\e_2}) \theta(t,x)\psi(v) \chi(v)\right\vert  \\
&\leq \Vert\psi\Vert_{L^\infty(\supp\chi)}\Vert \theta\Vert_{L^\infty} \left( \Vert u^1\Vert_{L^1_{t,x}} + \Vert u^2\Vert_{L^1_{t,x}}\right).
\end{align*}
Using this together with $S^j\in L^1$ and $\int m^j(dt,dx,dv)\vert\chi'(v)\vert <\infty$, and letting $K(x)$ approach $K\equiv 1$ nicely enough, we  obtain 
\begin{align*}
&\partial_t\int_{x,v} g^1_{\e_1}(1-g^2_{\e_2})\chi(v) \\
 &\leq \int m^1_{\e_1}(\cdot,dx,dv)\vert\chi'(v)\vert
 + \int m^2_{\e_2}(\cdot,dx,dv)\vert\chi'(v)\vert \\
&\quad +\int_x\big(S^1\chi(u^1)\sgn_{\frac 12}^+(u^1-u^2)_{\e_2}\big)_{\e_1} \\
&\quad - \int_x \big(S^2 \chi(u^2)\sgn_{\frac 12}^+(u^1-u^2)_{\e_1}\big)_{\e_2} .
\end{align*}
The same integrability properties also allow to let $\e_1,\e_2\to 0$ and to find
\begin{align*}
&\partial_t\int_{x,v} g^1(1-g^2)\chi(v)  \\
& \leq\int m^1(\cdot,dx,dv)\vert\chi'(v)\vert
 + \int m^2(\cdot,dx,dv)\vert\chi'(v)\vert \\
&\quad +\int_x S^1\chi(u^1)\sgn_{\frac 12}^+(u^1-u^2)
- \int_x S^2 \chi(u^2)\sgn_{\frac 12}^+(u^1-u^2) .
\end{align*}
We apply this inequality to a nonnegative test function $\zeta(t)$ and choose $\chi=\chi_n$ for a sequence $\chi_n\to 1$ a.e. with  $\chi_n'(v)\equiv 0$ for $\vert v \vert\leq n$ and $\vert \chi_n'\vert\leq 1$. Then the first two terms in the right-hand side are estimated by
\begin{equation*}
\Vert\zeta\Vert_{L^\infty}\cdot \sup_{\vert v\vert> n}\left(\mu^1(v) + \mu^2(v)\right),
\end{equation*}
which tends to $0$ as $n\to\infty$ since $\mu^j\in L^\infty_0$. The two last terms converge by dominated convergence, which yields
\begin{align*}
\partial_t\int_{x,v} g^1(1-g^2)  &\leq 
\int_x (S^1-S^2)\sgn_{\frac 12}^+(u^1-u^2).
\end{align*}
Applying this to a nonnegative test function $\zeta$ approaching $\zeta=\one_{[0,t]}$ and using that $u^j\in C([0,T],L^1(\R^n_x))$, we conclude that
\begin{equation*}
\int_x (u^1(t)-u^2(t))_+  \leq \int_x (u^1_0-u^2_0)_+  +\Vert S^1-S^2\Vert_{L^1((0,t)\times\R^n)}.
\end{equation*}

\textit{Existence}: Let $u_{0}^{\varepsilon}\in W^{1,\infty}(\R^{n})$, $S^{\varepsilon}\in W^{1,\infty}([0,T]\times\R^{n})$ with $u_{0}^{\varepsilon}\to u_{0}$ in $L^{1}(\R^{n})$ and $S^{\varepsilon}\to S$ in $L^{1}([0,T]\times\R^{n})$. Let $u^{\varepsilon}\in C([0,T];L^{1}(\R^{n}))$ be the corresponding unique entropy solution to \eqref{eq:intro_SCL} with initial condition $u_{0}^{\varepsilon}$ and force $S^{\varepsilon}$. By \eqref{eq:contraction} we have
\[
\|u^{\varepsilon}-u^{\delta}\|_{C([0,T];L^{1}(\R^{n}))}\le\|u_{0}^{\varepsilon}-u_{0}^{\delta}\|_{L^{1}(\R^{n})}+2\|S^{\varepsilon}-S^{\delta}\|_{L^{1}([0,T]\times\R^{n})}.
\]
Hence, there is a $u\in C([0,T];L^{1}(\R^{n}))$ such that $u^{\varepsilon}\to u$ in $C([0,T];L^{1}(\R^{n}))$ and thus almost everywhere for a subsequence. It is then easy to see that $u$ is a kinetic solution to \eqref{eq:intro_SCL}. \end{proof}
\begin{prop}
\label{prop:mass_bound}Let $u_{0}\in L^{1}(\R^{n})$, $S\in L^{1}([0,T]\times\R^{n})$ and $u$ be the corresponding entropy solution to \eqref{eq:intro_SCL}. For each $\alpha\in(0,1)$ and each $v_0 \in \R$ the measure $|v-v_0|^{\alpha-1}m$ has locally finite mass.\end{prop}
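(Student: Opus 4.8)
The plan is to reduce the statement to the control on the velocity marginal of $m$ recorded in the definition of a kinetic/entropy solution. Since ``locally finite mass'' only requires finiteness on compact sets, and the weight $|v-v_0|^{\alpha-1}$ is bounded away from $v=v_0$, the only real issues are to control the mass of $m$ near $v_0$ and to make sense of pairing $m$ with a weight that blows up there. The starting point is that, by definition, $m\ge0$ and its marginal in $v$ satisfies $\int_{(0,T)\times\R^n} m(dt,dx,\cdot)\le\mu(\cdot)\,dv$ for some $\mu\in L_0^\infty(\R)$.

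First I would fix a bounded interval $J\subset\R$ containing $v_0$ and pair $m$ with the nonnegative weight $|v-v_0|^{\alpha-1}\one_J(v)$. As this weight is unbounded at $v_0$, I would test first against the truncations $w_N(v):=\min(|v-v_0|^{\alpha-1},N)\one_J(v)\in L^\infty$, apply the marginal bound, and then send $N\to\infty$ by monotone convergence --- legitimate precisely because $m\ge0$. This gives
\[
\int_{(0,T)\times\R^n\times J}|v-v_0|^{\alpha-1}\,m\le\int_J|v-v_0|^{\alpha-1}\mu(v)\,dv\le\|\mu\|_{L^\infty}\int_J|v-v_0|^{\alpha-1}\,dv,
\]
and the last integral is finite because $\alpha\in(0,1)$ makes $|v-v_0|^{\alpha-1}$ integrable across the singularity at $v_0$ while $J$ is bounded. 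Since every compact subset of $(0,T)\times\R^n\times\R$ sits inside some $(0,T)\times\R^n\times J$ with $J$ bounded, this proves that $|v-v_0|^{\alpha-1}m$ has locally finite mass.

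The only genuinely substantive input is the boundedness and decay of $\mu$, which is the manifestation of the sign of $m$ and the entropy dissipation. If one prefers not to quote it, I would re-derive a bound of this type from the equation: testing \eqref{eq:app_loc_kin} against $\beta'(v)\phi(x)$ for a convex $\beta$ with $\beta(0)=0$, $\beta'$ bounded, and $\beta''(v)=|v-v_0|^{\alpha-1}\chi(v)$ for a cutoff $\chi\equiv1$ on $J$, integrating by parts in $v$ to produce $-\int\beta''(v)m$ and integrating in $x$ to discard the flux term, yields the entropy balance
\[
\int\beta''(v)\,m=\int_{\R^n}\beta(u_0)-\int_{\R^n}\beta(u(T))+\int\beta'(u)\,S.
\]
Here $|\beta(w)|\le\|\beta'\|_\infty|w|$ bounds the first two terms by $\|\beta'\|_\infty(\|u_0\|_{L^1}+\sup_t\|u(t)\|_{L^1})$ and the last by $\|\beta'\|_\infty\|S\|_{L^1}$, all finite since $u\in C([0,T];L^1)$. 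Crucially, $m\ge0$ and $\beta''\ge0$ turn the left-hand side into the \emph{genuine} weighted mass $\int_{\cdots\times J}|v-v_0|^{\alpha-1}m$ rather than a signed quantity subject to cancellation; this is exactly the point where the sign of the entropy production enters.

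The main obstacle is not the estimate but its rigorous justification for merely $L^1$ data, where $u$ may be unbounded. On the direct route it is the monotone-convergence step for the singular weight, which relies on $m\ge0$ and realizes the gain over the plain finite-mass bound. On the self-contained route one must additionally control the spatial flux term at infinity (via spatial cutoffs together with $u\in L^1$ and $q(0)=0$) and carry out the computation on the smooth approximations $u^\varepsilon$ from the existence proof, passing to the limit through $m^\varepsilon\rightharpoonup m$ and the lower semicontinuity of $\nu\mapsto\int\psi\,d\nu$ for nonnegative lower semicontinuous $\psi$.
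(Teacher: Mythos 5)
Your proposal is correct, but your primary argument takes a genuinely different (and shorter) route than the paper's proof. You invoke the bound $\int m(t,x,v)\,dt\,dx\le\mu(v)$ with $\mu\in L^{\infty}_{0}(\R)$, which is indeed part of the paper's definition of a kinetic/entropy solution in Appendix~\ref{sec:kinetic_solutions}; combined with the local integrability of $|v-v_0|^{\alpha-1}$ for $\alpha\in(0,1)$ and monotone convergence (legitimate since $m\ge0$), this gives the claim at once, and in fact the stronger statement that $|v-v_0|^{\alpha-1}m$ has finite mass on all of $(0,T)\times\R^{n}\times J$ for every bounded interval $J$. The paper never touches this marginal bound: it re-derives the estimate from the kinetic equation itself, localizing with a single nonnegative cutoff $\phi(t,x,v)$ in \emph{all} variables, testing with smooth compactly supported $\eta^{\varepsilon}$ satisfying $\sgn\eta^{\varepsilon}=\sgn v$, $\eta^{\varepsilon}\le\frac{1}{\alpha}|v|^{\alpha}$, $(\eta^{\varepsilon})'\uparrow|v|^{\alpha-1}$, discarding the nonnegative term $\int_{x,v}\eta^{\varepsilon}\phi f(t)$ (here the sign structure $vf\ge0$ enters), and concluding by Fatou's lemma, with a constant depending on $\norm{u_0}_{L^{1}_{loc}}$, $\norm{m}_{\mathcal{M}_{loc}}$, $\norm{S}_{L^{1}_{loc}}$. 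This is essentially your fallback route, except that the paper's localization in $(t,x,v)$ simultaneously sidesteps exactly the two difficulties you flag for merely $L^{1}$ data --- the flux term at spatial infinity and the possible unboundedness of $u$ --- so no spatial cutoff limit, no growth control on the entropy flux, and no approximation by smooth solutions is required. The trade-off: your route is immediate but leans on a clause of the definition whose verification for the constructed solution is only sketched in the existence proof (``it is then easy to see\dots''), while the paper's argument is self-contained and applies to any nonnegative Radon measure $m$ solving the kinetic equation, i.e.\ it does not presuppose the a priori bound by $\mu$ at all.
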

\begin{proof}
Let $\alpha\in(0,1)$. Let $\phi$ be a non-negative smooth compactly supported function in $(0,T)\times\R^{n}\times\R$. Then \eqref{eq:app_loc_kin} implies, with $\tilde{f}:=\phi f$, $\tilde{m}:=\phi m$,
\begin{equation}
\partial_{t}\tilde{f}+a(v)\cdot\nabla_{x}\tilde{f}=\partial_{v}\tilde{m}-(\partial_{v}\phi)m+\phi\delta_{u=v}S+\partial_{t}\phi f+(a(v)\cdot\nabla_{x}\phi)f.\label{eq:app_loc_kin_2}
\end{equation}
By translation we may assume $v_0 = 0$. We next choose a sequence of smooth, compactly supported functions $\eta^{\varepsilon}$ such that $\sgn(\eta^{\varepsilon}(v))=\sgn(v)$, $\eta^{\varepsilon}(v)\le\frac{1}{\alpha}\abs{v}^{\alpha}$ and $(\eta^{\varepsilon})^{'}\uparrow\abs{v}^{\alpha-1}$ pointwise. Multiplying \eqref{eq:app_loc_kin_2} by $\eta^{\varepsilon}$ and integrating yields 
\begin{align*}
\int_{x,v}(\eta^{\varepsilon}\tilde{f})(t) & +\int_{0}^{t}\int_{x,v}(\eta^{\varepsilon})'\tilde{m}=\int_{x,v}(\eta^{\varepsilon}\tilde{f})(0)\\
+\int_{0}^{t} & \int_{x,v}-\eta^{\varepsilon}(\partial_{v}\phi)m+\eta^{\varepsilon}\phi\delta_{u=v}S+\eta^{\varepsilon}\partial_{t}\phi f+\eta^{\varepsilon}(a(v)\cdot\nabla_{x}\phi)f.
\end{align*}

Since $m\ge0$, by Fatou's Lemma we may pass to the limit $\varepsilon\to0$ to obtain 
\begin{align*}
\int_{0}^{t}\int\abs{v}^{\alpha-1}\tilde{m}\,dxdvdr\le & C<\infty
\end{align*}
for some constant $C$ depending on $\|u_{0}\|_{L_{loc}^{1}},\|m\|_{\mathcal{M}_{loc}},\|S\|_{L_{loc}^{1}}$.
\end{proof}

\section{A basic estimate\label{sec:A-basic-estimate}}

From \cite{porous_medium} we recall the following basic $L^{p}$-estimate for certain Fourier multipliers. This result generalizes \cite[Lemma 2.2]{tadmor_tao_07} by taking into account possible $v$-regularity of $f$. This allows to avoid bootstrapping arguments in the application to scalar conservation laws. This is crucial in the case of scalar conservation laws with $L^{1}$-forcing, since in this case bootstrapping arguments do not apply.
\begin{lem}
\label{lem:Lpestim}Let $m(\tau',\xi',v):=i\tau'+ia(v)\cdot\xi'$, $\varphi,\phi$ be bounded, smooth functions, $\psi$ be a smooth cut-off function and $M_{\psi}$ be the Fourier multiplier with symbol $\varphi(\tau',\xi')\psi\left(\frac{m(\tau',\xi',v)}{\delta}\right)$. Then, for all $1<p\le2$, $\sigma\ge0$, $r\in[\frac{p'}{1+\sigma p'},p']\cap(1,\infty)$,
\begin{align*}
\|\int M_{\psi}f\phi\,dv\|_{L^{p}(\R_{t}\times\R^{n}_{x})} & \lesssim\|f\phi\|_{L^{p}(\R_{t}\times\R_{x}^{n};W^{\sigma,p}(\R_{v}))}\sup_{\tau',\xi'\in\mathrm{supp}\varphi}|\Omega_{m}(\tau',\xi',\delta)|^{\frac{1}{r}},
\end{align*}
where $\Omega_{m}(\tau',\xi',\delta)=\{v\in\supp\phi:\,|m(\tau',\xi',v)|\le\delta\}$. Moreover,
\begin{align*}
\|\int M_{\psi}f\phi\,dv\|_{\mathcal{M}(\R_{t}\times\R_{x}^{n})} & \lesssim\|f\phi\|_{\mathcal{M}(\R_{t}\times\R_{x}^{n})}.
\end{align*}

\end{lem}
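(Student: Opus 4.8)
The plan is to prove the $L^p$ bound through a single chain of estimates: a one-dimensional Sobolev--Hölder inequality in the velocity variable, the Hausdorff--Young inequality in $(t,x)$, and Minkowski's integral inequality to reconcile the resulting mixed norms. The measure bound is then the endpoint $q=1$ version, which I would handle separately by duality.

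First I would isolate the purely one-dimensional ingredient: for $u\in W^{\sigma,p}(\R_v)$ compactly supported and a measurable set $\Omega\subset\R$,
\[
\abs{\int_{\Omega} u\, dv}\lesssim \norm{u}_{W^{\sigma,p}(\R_v)}\,\abs{\Omega}^{1/r},\qquad \frac1r\in\Bigl[\frac1{p'},\frac1{p'}+\sigma\Bigr].
\]
This follows from the Sobolev embedding $W^{\sigma,p}(\R)\hookrightarrow L^{q_0}(\R)$ (with $1/q_0=1/p-\sigma$ when $\sigma<1/p$, and $q_0=\infty$ when $\sigma\ge 1/p$) combined with Hölder's inequality $\int_\Omega\abs u\le\norm{u}_{L^{q_0}}\abs\Omega^{1/q_0'}$; letting $q_0$ range over $[p,q_0^{\max}]$ produces the full stated range of $r$, and the requirement $r>1$ excludes the forbidden endpoint $1/r=1$ arising when $\sigma\ge1/p$. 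Since $\psi$ is a cut-off, $\psi(m(\tau',\xi',v)/\delta)$ is bounded and supported in $\{v:\abs{m(\tau',\xi',v)}\le C\delta\}$, so applying this to $u=\hat f(\tau,\xi,\cdot)\phi$, with $\hat f$ the Fourier transform in $(t,x)$ only, gives for each fixed $(\tau,\xi)$
\[
\abs{\int \psi\Bigl(\tfrac{m}{\delta}\Bigr)\hat f\,\phi\, dv}\lesssim \norm{\hat f(\tau,\xi,\cdot)\phi}_{W^{\sigma,p}(\R_v)}\Bigl(\sup_{\tau',\xi'\in\supp\varphi}\abs{\Omega_m(\tau',\xi',\delta)}\Bigr)^{1/r},
\]
the harmless constant $C$ coming from $\supp\psi$ being absorbed into $\lesssim$.

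Next I would transfer this to $L^p_{t,x}$. Because $1<p\le2$, Hausdorff--Young gives $\norm{\int M_\psi f\phi\, dv}_{L^p_{t,x}}\le\norm{\mathcal F_{t,x}(\int M_\psi f\phi\, dv)}_{L^{p'}_{\tau,\xi}}$, and the transform on the right equals $\varphi(\tau',\xi')\int\psi(m/\delta)\hat f\phi\, dv$. Using $\abs\varphi\lesssim1$ and the fiberwise bound above,
\[
\norm{\int M_\psi f\phi\, dv}_{L^p_{t,x}}\lesssim \Bigl(\sup\abs{\Omega_m}\Bigr)^{1/r}\,\norm{\,\norm{\hat f(\tau,\xi,\cdot)\phi}_{W^{\sigma,p}_v}\,}_{L^{p'}_{\tau,\xi}},
\]
so it remains to show $\norm{\,\norm{\hat f\phi}_{W^{\sigma,p}_v}\,}_{L^{p'}_{\tau,\xi}}\lesssim\norm{f\phi}_{L^p_{t,x}(W^{\sigma,p}_v)}$. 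Setting $w=(1-\partial_v^2)^{\sigma/2}(f\phi)$, which commutes with $\mathcal F_{t,x}$, this is $\norm{\hat w}_{L^{p'}_{\tau,\xi}(L^p_v)}\lesssim\norm{w}_{L^p_{t,x,v}}$. Here is the crux: since $p\le2\le p'$ we have $p\le p'$, so Minkowski's integral inequality lets me swap the norms, $\norm{\hat w}_{L^{p'}_{\tau,\xi}(L^p_v)}\le\norm{\hat w}_{L^p_v(L^{p'}_{\tau,\xi})}$, after which Hausdorff--Young fiberwise in $v$ gives $\norm{\hat w(\cdot,v)}_{L^{p'}_{\tau,\xi}}\le\norm{w(\cdot,v)}_{L^p_{t,x}}$, and integrating the $p$-th power in $v$ closes the estimate. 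This settles the case $q>1$, and the exponent bookkeeping is exactly what forces the range $r\in[\tfrac{p'}{1+\sigma p'},p']\cap(1,\infty)$.

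The main obstacle is precisely this mixed-norm juggling. Applying Hausdorff--Young and Hölder in $v$ naively produces the ill-ordered norm $L^{p'}_v(L^p_{t,x})$, which for $p<2$ is \emph{not} controlled by $L^p_{t,x,v}$; the resolution is to use the one-dimensional Sobolev--Hölder estimate with velocity exponent exactly $p$ (so the Sobolev exponents match) and to invoke Minkowski in the favorable direction $p\le p'$. For the measure estimate I would run the same scheme by duality, pairing $\int M_\psi f\phi\, dv$ against test functions and bounding the total variation, with Hausdorff--Young replaced by the elementary bound on $\mathcal M$; no $\Omega_m$-gain is sought there, so only crude boundedness is needed, and this endpoint is the one place that must be checked directly rather than read off from the $L^p$ chain.
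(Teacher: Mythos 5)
Your one-dimensional Sobolev--H\"older estimate in $v$ (step 1) and the Minkowski/fiberwise Hausdorff--Young swap (step 3) are both correct, but the proof breaks at the step that connects them: the inequality $\norm{g}_{L^{p}(\R_{t}\times\R_{x}^{n})}\le\norm{\mathcal{F}_{t,x}g}_{L^{p'}_{\tau,\xi}}$ is \emph{not} Hausdorff--Young and is false for every $1<p<2$. Hausdorff--Young goes the other way, $\norm{\mathcal{F}g}_{L^{p'}}\le\norm{g}_{L^{p}}$, and it can only be reversed when the exponent on the physical side is $\ge 2$ (apply it to $\mathcal{F}^{-1}$). A concrete counterexample, of exactly the oscillatory type that arises in averaging lemmas: the chirps $g_{\lambda}(t)=e^{i\lambda t^{2}}\chi(t)$ with $\chi$ a fixed bump satisfy $\norm{g_{\lambda}}_{L^{p}}=\norm{\chi}_{L^{p}}$ for all $\lambda$, while stationary phase gives $\abs{\mathcal{F}g_{\lambda}}\sim\lambda^{-1/2}$ on an interval of length $\sim\lambda$, so $\norm{\mathcal{F}g_{\lambda}}_{L^{p'}}\sim\lambda^{1/p'-1/2}\to0$ because $p'>2$. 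Hence a pointwise-in-$(\tau,\xi)$ bound on the Fourier transform, which is all your step 1 produces, cannot control the $L^{p}$ norm when $p<2$; your chain closes only at $p=2$, where Plancherel does exactly what you want. This is fatal for the lemma as stated and as used: in Lemma \ref{lem:A1f} it is invoked with $p=q\in(1,2]$, i.e.\ precisely in the regime $p<2$.

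This gap is the reason the proof in \cite{porous_medium} (following Tadmor--Tao and DiPerna--Lions--Meyer) does not argue on the Fourier side. The key input there is the \emph{truncation property}: $\psi\bigl((i\tau'+ia(v)\cdot\xi')/\delta\bigr)$ is a bounded $L^{p}$ (and $\mathcal{M}$) Fourier multiplier \emph{uniformly} in $v$ and $\delta$ --- a genuinely nontrivial fact, since the symbol is homogeneous of degree zero in $(\tau,\xi)$ and such symbols are in general not even $L^{1}$-bounded (think of Riesz transforms); it is obtained from the invariance of the multiplier norm under partial dilations together with the Marcinkiewicz multiplier theorem. The gain $\abs{\Omega_{m}}^{1/r}$ then comes from interpolation: at the $L^{2}$ end Plancherel turns the $v$-integration of $\abs{\psi(m/\delta)}^{2}$ into the measure of $\Omega_{m}$, while at the other end the uniform multiplier bound (combined with the Sobolev embedding in $v$ you also use) gives boundedness with no gain, and interpolating produces the stated range $r\in[\tfrac{p'}{1+\sigma p'},p']\cap(1,\infty)$. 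Note also that your measure case rests on the same unproved claim: the ``elementary bound on $\mathcal{M}$'' you invoke is $\mathcal{M}$-boundedness of the multiplier, which is again the truncation property, so that ingredient cannot be bypassed in either half of the lemma.
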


Lemma~\ref{lem:Lpestim} relies on the fact that $\psi\left(\frac{i\tau'+ia(v)\cdot\xi'}{\delta}\right)$ is a bounded $L^{p}$ (and $\mathcal{M}$) multiplier uniformly in $v\in I$ and $\delta>0$ (the \textit{truncation property} in \cite{tadmor_tao_07}). This can be deduced, arguing as in \cite{diperna_lions_meyer_91}, from the invariance of the $L^{p}$ multiplier norm under partial dilations and the Marcinkiewicz multiplier theorem. For details we refer to \cite[Lemma~A.3]{porous_medium}. 
%\textcolor{red}{(I specified the lemma, but the numbering may have changed from the version you gave me.)}

\bibliographystyle{abbrv}
\bibliography{averaging}

\end{document}